\documentclass[12pt]{amsart}
\usepackage[a4paper,margin=3cm]{geometry}
\usepackage{amsmath,amssymb,mathtools}
\usepackage{microtype}
\usepackage[hidelinks]{hyperref}
\newcommand{\E}{\mathbb E}
\newcommand{\N}{\mathbb N}
\newcommand{\Z}{\mathbb Z}
\newcommand{\C}{\mathbb C}
\newcommand{\1}{\mathbf 1}
\newcommand{\e}{\mathrm e}
\newcommand{\calS}{\mathcal S}

\newtheorem{theorem}{Theorem}[section]
\newtheorem{proposition}[theorem]{Proposition}
\newtheorem{lemma}[theorem]{Lemma}

\title{Polynomial configurations and pointwise averages
along Piatetski--Shapiro sequences}
\author[H. MOUSAVI]{Hamed Mousavi}
    \address{Department of Mathematics, University of Bristol, Bristol, UK}
    \email {gj23799@bristol.ac.uk}
\date{}

\begin{document}

\begin{abstract}
In this paper, we prove that for every integer $k\geq2$ and every
$c>1$ sufficiently close to $1$, there is $\kappa>0$ such that every
sufficiently large subset of $\{1,\ldots,N\}$ of density at least
$(\log\log N)^{-\kappa}$ contains
\[
 x,\quad x+\lfloor n^c\rfloor,\quad x+\lfloor n^c\rfloor^2,
 \quad\ldots,\quad x+\lfloor n^c\rfloor^k.
\]
We also prove pointwise almost-everywhere convergence of the associated multiple ergodic averages.
\end{abstract}

\maketitle
\pagestyle{plain}

\section{Introduction}

Bergelson and Leibman proved the polynomial Szemer\'edi theorem
\cite{BergelsonLeibman}, which states that every set of integers of positive upper density contains polynomial configurations with a common parameter. Peluse subsequently obtained the first quantitative result for general distinct-degree families, proving that every sufficiently large subset of \([N]\) with density at least \((\log\log N)^{-\kappa}\) contains polynomial configurations \cite{Peluse}. Shao and Wang recently strengthened this to a polylogarithmic density bound, while also proving a quantitative popular-difference theorem \cite[Theorems~1.1 and~1.2]{ShaoWang}.

Sárközy established one of the first results where the common difference belongs to a sparse subset of integers with special structures. Indeed, he proved that every set of integers with positive density contains a two-term configuration with a square or shifted prime difference \cite{SarkozySquares,SarkozyShiftedPrimes}. In this direction, Piatetski--Shapiro sequences have been studied both
as restricted sets of common differences and as sparse sampling
sequences in ergodic theory.
Frantzikinakis and Wierdl proved that every set of positive upper density
contains arbitrarily long arithmetic progressions whose common difference
is of the form $\lfloor n^c\rfloor$
\cite[Theorem~A]{FrantzikinakisWierdl}.  For the configuration 
$x,x+\lfloor n^c\rfloor$, Rivat and S\'ark\"ozy obtained a power-saving
density bound \cite{RivatSarkozy}, which was recently strengthened for
$1<c<6/5$ by O'Keeffe \cite[Theorem~1.3]{OKeeffe}.
Boshernitzan and Wierdl proved pointwise convergence for single ergodic
averages along $\lfloor n^c\rfloor$ \cite{BoshernitzanWierdl}.  More
recently, Daskalakis established bilinear ergodic and Roth-type results
along fractional powers,  including configurations of the form $\{ x,x+\lfloor n^c\rfloor,
 x+2\lfloor n^c\rfloor\}$
for $c>1$ sufficiently close to $1$
\cite[Theorem~1.7]{Daskalakis}.  These results concern either a single
difference or equal-degree linear configurations.   In this paper, we establish a quantitative
distinct-degree polynomial Szemer\'edi theorem along
Piatetski--Shapiro integers and prove the corresponding pointwise
multiple ergodic theorem.

For $c>1$, write
\[
 \calS_c:=\{\lfloor n^c\rfloor:n\in\N\}.
\]
Fix throughout
\begin{equation}\label{e:polynomials}
 P_0=0,\qquad P_1,\ldots,P_\ell\in\Z[y],\qquad
 P_i(0)=0,\qquad
 1\leq\deg P_1<\cdots<\deg P_\ell=d.
\end{equation}
The vanishing constant-term assumption is standard in polynomial
recurrence.
Our first result is a quantitative nonlinear Szemer\'edi theorem\footnote{We do not recover the polylogarithmic threshold of
\cite{ShaoWang}: our comparison requires their popular-difference
estimate, whose quantitative dependence yields the double-logarithmic
bound above.} along
$\calS_c$.

\begin{theorem}[Piatetski--Shapiro polynomial Szemer\'edi theorem]
\label{t:szemeredi}
For every family satisfying~\eqref{e:polynomials}, there are $c_*>1$
and $\kappa>0$ such that the following holds.  If $1<c<c_*$, then every
sufficiently large $A\subset[N]$ satisfying
\[
 |A|\geq \frac{N}{(\log\log N)^\kappa}
\]
contains
\begin{equation}\label{e:pattern}
 x,\quad x+P_1(\lfloor n^c\rfloor),\quad\ldots,\quad
 x+P_\ell(\lfloor n^c\rfloor)
\end{equation}
for some positive integers $x,n$. Moreover, $x,n$ may be chosen so that the displayed terms are
pairwise distinct.
\end{theorem}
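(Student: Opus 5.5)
We plan to compare the count of configurations~\eqref{e:pattern} with Piatetski--Shapiro differences to the corresponding count with all integer differences, weighted by the natural density of $\calS_c$. Then we import the popular-difference theorem of Shao and Wang \cite[Theorem~1.2]{ShaoWang}. Write $\gamma=1/c\in(0,1)$. An integer $m$ lies in $\calS_c$ exactly when $\lfloor -m^\gamma\rfloor-\lfloor-(m+1)^\gamma\rfloor=1$. This gives the standard decomposition
\[
 \1_{\calS_c}(m)=\gamma m^{\gamma-1}+O(m^{\gamma-1-1})+\psi(-(m+1)^\gamma)-\psi(-m^\gamma),\qquad \psi(t)=\{t\}-\tfrac12 .
\]
Set $M\asymp N^{1/d}$. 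We define
\[
 \Lambda_{\calS}(A)=\sum_{x}\sum_{m\le M}\1_{\calS_c}(m)\,\gamma^{-1}m^{1-\gamma}\prod_{i=0}^{\ell}\1_A(x+P_i(m)),
\]
and let $\Lambda(A)$ be the same sum with the weight $\1_{\calS_c}(m)\gamma^{-1}m^{1-\gamma}$ replaced by $1$. The proof reduces to showing
\[
 |\Lambda_{\calS}(A)-\Lambda(A)|\ll N M^{1-\eta}
\]
for some $\eta=\eta(c)>0$ whenever $c<c_*$.

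For the error term, we expand $\psi$ in a truncated Fourier series (Vaaler's approximation) with $H=M^{1-\gamma+\epsilon}$ frequencies. This reduces the problem to bounding, uniformly in $1\le |h|\le H$, weighted sums
\[
 \sum_{x}\sum_{m\sim M'} m^{1-\gamma}\,\e(h m^\gamma)\,\prod_{i=0}^{\ell} f_i(x+P_i(m))
\]
with $1$-bounded $f_i$, with a saving $M'^{-\eta}$ over the trivial bound. The plan here is to run Peluse's PET/degree-lowering machinery directly, following the argument of \cite{Peluse}. After $s$ rounds of van der Corput in $m$, we eliminate $f_1,\dots,f_\ell$ and land on a sum of the phase $\e(h\Delta_{k_1,\dots,k_s}m^\gamma)$ against a product over shifted $m$. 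Alternatively, we control the count by a Gowers-type norm of the dual function and transfer the phase via a Cauchy--Schwarz in $x$. Since all differencing parameters are $\le M^{1-\delta}$, the iterated difference has size about $|h|k_1\cdots k_s M^{\gamma-s}$. We then apply the van der Corput $k$-th derivative test (or exponent pairs) to the resulting exponential sum in $m$. This yields a saving $M^{-\eta}$ provided $1-\gamma$ is small enough in terms of $d,\ell$. That requirement is exactly what fixes $c_*$.

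Given the comparison, the popular-difference estimate of Shao--Wang says: if $|A|\ge\alpha N$, then a positive proportion $\gg\exp(-\alpha^{-C})$-type quantity of $m\le M$ satisfy
\[
 \#\{x:\ x+P_i(m)\in A\ \forall i\}\ge \alpha^{C}N .
\]
Consequently $\Lambda(A)\ge c(\alpha)NM$. Combined with the comparison, this gives $\Lambda_{\calS}(A)>0$ once $c(\alpha)\gg M^{-\eta}$. Since $c(\alpha)$ is only $\exp(-\exp(\alpha^{-O(1)}))$-small in the quantitative form we can use, this holds for $\alpha\ge(\log\log N)^{-\kappa}$, which explains the double logarithm. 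To get distinct terms, we first discard $m=0$. Degenerate configurations with $P_i(m)=P_j(m)$ for some $i\ne j$ occur only for $O(1)$ values of $m$, since the $P_i$ are distinct polynomials. They contribute $O(N)$ to $\Lambda_{\calS}$, which is negligible against $c(\alpha)NM^{\gamma}\cdot M^{1-\gamma}$.

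The main obstacle is the uniform-in-$h$ exponential sum estimate in the middle step. The difficulty is getting a genuine power saving after PET differencing while $h$ ranges up to $M^{1-\gamma+\epsilon}$: the derivative tests only beat the trivial bound when $M^{1-\gamma}$ is tiny relative to the losses from $s$ Cauchy--Schwarz steps. Our first attempt would use Peluse's bound in its ``$U^s$-control with polynomial losses'' form, so that the number of differencing steps $s=s(d,\ell)$ is fixed. We would then take $c_*-1\asymp 1/(\text{const}\cdot 2^{s})$.
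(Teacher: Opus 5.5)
Your architecture is the paper's: split $\1_{\calS_c}$ into the smooth density $(m+1)^\gamma-m^\gamma$ plus the sawtooth discrepancy, apply Vaaler, show that the discrepancy-weighted count saves a power exceeding the $M^{1-\gamma}$ loss once $c$ is close to $1$, and compare with an unrestricted count from Shao--Wang. Your normalisation by $\gamma^{-1}m^{1-\gamma}$ with $H=M^{1-\gamma+\epsilon}$ is equivalent bookkeeping to the paper's $H=M^\theta$ together with $1-\gamma<\beta/2$.

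The genuine difference is how you bound the weighted count. The paper uses Ter\"av\"ainen's theorem (Theorem~\ref{t:teravainen}), which controls the count by $\|r_c\|_{u^{d+1}[M]}$. The only exponential sums it then needs are $\sum_m\e(Q(m)\pm hm^\gamma)$ with $\deg Q\le d$, and the $(d+1)$-st derivative test annihilates $Q$ (Lemma~\ref{l:fractional-phase}, Proposition~\ref{p:phase}).

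Your mechanism also works. You carry $\theta(m)=m^{1-\gamma}\e(hm^\gamma)$ through PET: each van der Corput step replaces $\theta$ by $\Delta_k\theta$, and each Cauchy--Schwarz in $x$ discards a function, continuing through the linear stage until nothing depends on $m$. This bounds the normalised count by $\bigl(\E_{\vec k}|\E_m\Delta_{k_1,\dots,k_s}\theta(m)|+\text{small}\bigr)^{2^{-s}}$. That is plain PET. Because the weight is explicit, this Gowers-type quantity can be evaluated directly, so no degree lowering or inverse theorem is needed, and the dual-function alternative is superfluous. Degree lowering is the kind of input behind the stronger $u^{d+1}$-control in Theorem~\ref{t:teravainen}.

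You must make three points explicit for your route to go through:
\begin{itemize}
\item The shifts must range up to $M^{1-\delta}$ with $s\delta$ small compared with $\gamma$. Since $|h\Delta_{\vec k}m^\gamma|\asymp|h||k_1\cdots k_s|M^{\gamma-s}$ and $|h|\le M^{1-\gamma+\epsilon}$, this is $o(1)$ for bounded $k_i$ once $s\ge2$, so short-shift PET gives nothing.
\item The $\vec k$ with some $|k_i|\le M^{1-2\delta}$, or with degenerate PET coefficients, must be discarded at cost $O(M^{-\delta})$.
\item For the remaining $\vec k$, the second derivative of the phase is $\asymp|h||k_1\cdots k_s|M^{\gamma-s-2}$. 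The second-derivative case of Lemma~\ref{l:derivative-test}, after partial summation in the smooth amplitude, then gives a saving uniform in $|h|\le M^{1-\gamma+\epsilon}$. The $2^{-s}$ loss is what fixes $c_*$.
\end{itemize}
Your route is self-contained but requires running PET with $\vec k$-dependent families. The paper's route is short given Ter\"av\"ainen's theorem, and its orbit form is reused for Theorem~\ref{t:pointwise}.

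One step needs correcting. Shao--Wang's Theorem~1.2 gives a single popular difference, not a positive proportion of them. A single $m$ contributes only $O(N)$ to $\Lambda(A)$ and need not lie in $\calS_c$, so it is useless against an error of size $NM^{1-\eta}$. The averaged bound $\Lambda(A)\ge\exp(-\exp(C\alpha^{-C}))NM$ has to be extracted from two of their results, as in Lemma~\ref{l:shao-wang}:
\begin{itemize}
\item their Proposition~5.1, which gives regularity along $m\le\varepsilon M_1$ with $q\mid m$, where $q\le\exp(\exp(C\alpha^{-C}))$ and $M_1\ge M\exp(-\exp(C\alpha^{-C}))$;
\item their Lemma~5.2, which gives positivity of the local-factor count.
\end{itemize}
It is this double exponential, not an $\exp(-\alpha^{-C})$ proportion, that produces the $(\log\log N)^{-\kappa}$ threshold.

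Finally, do not drop Vaaler's majorant $B_H$. You bound it using $\prod_i\1_A\ge0$ and the pure sums $\sum_m m^{1-\gamma}\e(hm^\gamma)$, and this is exactly where $H>M^{1-\gamma}$ is needed in your normalisation.
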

The argument also gives the following quantitative counting estimate,
based on the popular-difference theorem of Shao--Wang.  For
$A\subset[N]$ and $M=\lfloor N^{1/d}\rfloor$, define
\begin{align*}
 \mathcal C_{c,\mathbf P}(A;M):=
 \bigl|\bigl\{(x,n)\in\Z\times\N:{}&
 \lfloor n^c\rfloor\leq M,\ 
 x+P_i(\lfloor n^c\rfloor)\in A\quad(0\leq i\leq\ell)
 \bigr\}\bigr|.
\end{align*}
For every $1<c<c_*$, there are constants $C,\zeta>0$, depending only
on the fixed polynomial family, such that, writing
$0<\delta=|A|/N$,
\begin{equation}\label{e:quantitative-count}
 \frac{\mathcal C_{c,\mathbf P}(A;M)}{NM^{1/c}}
 \geq \exp\!\bigl(-\exp(C\delta^{-C})\bigr)
      -O_{c,\mathbf P}(M^{-\zeta})
\end{equation}
whenever $ N\geq\exp\!\bigl(\exp(C\delta^{-C})\bigr).$

The connection between Szemerédi-type theorems and multiple recurrence was studied by Furstenberg \cite{Furstenberg} and was extended to polynomial configurations by Bergelson and Leibman \cite{BergelsonLeibman}. This connection suggests a corresponding ergodic theorem analogous to Theorem~\ref{t:szemeredi}. For Piatetski--Shapiro times, Frantzikinakis
\cite[Theorem~2.1]{FrantzikinakisHardy} proved norm convergence for
linear multiple averages.  Daskalakis proved that single averages along orbits
$\lfloor n^c\rfloor^2$ converge pointwise to the ordinary quadratic
limit \cite[Theorem~1.13]{Daskalakis}.  In separate work, he proved
pointwise convergence for symmetric double recurrence averages along
$\calS_c$ when $1<c<23/22$
\cite[Theorem~1.7]{DaskalakisPointwise}.  Fornal and Krause subsequently
extended this range to
$1<c<7/6$, and also obtained a weighted version
\cite[Proposition~1.5 and Remark~4]{FornalKrause}. In this paper, we prove pointwise convergence for the general distinct-degree polynomial
orbits considered here.

 Let $(X,\nu,T)$ be an invertible probability measure-preserving system and
let $f_1,\ldots,f_\ell\in L^\infty(X)$.  Put
\begin{equation}\label{e:sparse-average}
 A_R^{(c)}(\mathbf f)(x):=
 \frac1R\sum_{n\leq R}\prod_{i=1}^{\ell}
 f_i\bigl(T^{P_i(\lfloor n^c\rfloor)}x\bigr).
\end{equation}
\begin{theorem}[Pointwise Piatetski--Shapiro averages]
\label{t:pointwise}
There exists $c_*>1$ such that the following statement  holds. Given $1<c<c_*$, the
averages~\eqref{e:sparse-average} converge almost everywhere, and their
limit agrees almost everywhere with the limit of
\begin{equation}\label{e:integer-average}
 \frac1M\sum_{m\leq M}\prod_{i=1}^{\ell}
 f_i\bigl(T^{P_i(m)}x\bigr).
\end{equation}
\end{theorem}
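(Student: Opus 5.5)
The plan is a comparison argument that transfers the problem from the sparse sequence $\lfloor n^c\rfloor$ to the full integer sequence. The standard model is Boshernitzan--Wierdl: one shows that the sparse averages are pointwise close to suitably weighted averages over all integers. The limit of those weighted averages is the limit of \eqref{e:integer-average}, whose almost-everywhere convergence for distinct-degree polynomial families we take as the base input.

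\textbf{Step 1: rewrite the sum over $m$.} For $c$ close to $1$ and $\gamma=1/c$, an integer $m$ lies in $\calS_c$ exactly when $\lfloor -m^\gamma\rfloor-\lfloor -(m+1)^\gamma\rfloor=1$. So, with $M=\lfloor R^c\rfloor$,
\[
 A_R^{(c)}(\mathbf f)(x)=\frac1R\sum_{m\le M}\bigl(\lfloor -m^\gamma\rfloor-\lfloor -(m+1)^\gamma\rfloor\bigr)\prod_{i}f_i(T^{P_i(m)}x)+O(1/R).
\]
Write the weight as $(m+1)^\gamma-m^\gamma+\psi(-(m+1)^\gamma)-\psi(-m^\gamma)$, where $\psi(t)=t-\lfloor t\rfloor-\tfrac12$.

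\textbf{Step 2: the main term.} The main term is $\frac1R\sum_{m\le M}\gamma m^{\gamma-1}F(m)$ with $F(m)=\prod_i f_i(T^{P_i(m)}x)$. By partial summation this is an average of the Ces\`aro averages \eqref{e:integer-average} against the weight $m^{\gamma-1}$. It therefore converges almost everywhere to the same limit whenever the unweighted averages do.

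\textbf{Step 3: the error term.} It remains to show that
\[
 E_R(x)=\frac1R\sum_{m\le M}\bigl(\psi(-(m+1)^\gamma)-\psi(-m^\gamma)\bigr)F(m)\to0
\]
almost everywhere. The procedure has four parts.
\begin{enumerate}
\item Expand $\psi$ by Vaaler's truncated Fourier series with $H=M^{1-\gamma+\epsilon}$. Each frequency $h$ then contributes $\frac1R\sum_m \e(h m^\gamma)\phi_h(m)F(m)$, where $\phi_h(m)=1-\e(h((m+1)^\gamma-m^\gamma))=O(hm^{\gamma-1})$.
\item Restrict to lacunary $R=\lfloor\rho^k\rfloor$; the general case follows by monotonicity for nonnegative $f_i$.
\item Bound the sum over $k$ of $\|E_{R_k}\|_{L^2}^2$.
\item Conclude by Borel--Cantelli.
\end{enumerate}
The $L^2$ norm of $\sum_m a_m F(m)$ with $a_m=\e(hm^\gamma)\phi_h(m)$ is controlled by a multiparameter van der Corput/PET argument. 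The aim is a bound of the form $\|\frac1M\sum_m a_m F(m)\|_{L^2}\ll \sup_\theta|\frac1{M}\sum_m a_m\e(\theta m)|^{\eta}$, plus similar bounds for polynomial phases in $m$. Concretely, each application of van der Corput differences the weight $a_m$ as well, turning $\e(hm^\gamma)$ into $\e(h(m^\gamma-(m+k)^\gamma))$, and so on. After $s$ steps (with $s$ depending on $d$ and $\ell$) one reaches a linear or polynomial average in which the only surviving oscillation is a Hardy-field phase $\e(h\,\Delta_{k_1,\dots,k_s}m^\gamma+Q(m))$ with $Q$ a polynomial. These phases are bounded by van der Corput's $k$-th derivative test, since $\gamma$ is non-integral and $h\le H$ is small. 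This yields a power saving $M^{-\zeta}$ provided $1-\gamma$ is small compared to the saving, which is what forces $c<c_*$. Summing over $h$ against the Vaaler coefficients $\ll 1/h$ costs only $\log H$. The bound is summable along lacunary $R_k$, which finishes the proof.

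\textbf{Main obstacle.} The hard part is the PET/van der Corput reduction in Step 3. It must be uniform in the system and in $x$, and it must track the weight through the differencing steps, where distinct degrees produce many polynomial-difference terms. It also has to yield a genuine power saving, so that the number of differencing steps $s$ fixes how close $c$ must be to $1$. I would first try to prove a weighted Weyl-type inequality for the correlation $\sum_m \e(hm^\gamma)\prod_i f_i(T^{P_i(m)}x)$ in $L^2(X)$, via Peluse-style degree lowering, reducing it to exponential sums $\sum_m\e(hm^\gamma+\alpha m^j)$.
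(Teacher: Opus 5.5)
Your outer architecture matches the paper's, and that part is correct. You split $\1_{\calS_c}=\Delta_c+r_c$ with $r_c(m)=\psi(-(m+1)^\gamma)-\psi(-m^\gamma)$. You treat the $\Delta_c$-part by a Toeplitz/Riesz-means argument on top of the Kosz--Mirek--Peluse--Wan--Wright theorem. You then kill the $r_c$-part in norm along a sparse sequence of $R$; your lacunary sequence with monotonicity works as well as the paper's $R_j=\lfloor j^A\rfloor$ with trivial gap filling.

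\textbf{The gap.} The one step that carries all the content is a bound
\[
 \Bigl\|\E_{m\leq M}r_c(m)\prod_{i=1}^{\ell}f_i(T^{P_i(m)}\cdot)\Bigr\|_{L^1(X)}\ll_{c,\mathbf P}M^{-\beta},
\]
uniform over systems and $1$-bounded $f_i$, with $\beta$ independent of $c$ near $1$. You defer it as the ``main obstacle'', and your sketch does not produce it as stated. Weighted van der Corput/PET, ending with the spectral theorem, gives control by averages $\E_{\underline k}\sup_\theta|\E_m\Delta_{\underline k}a(m)\e(\theta m)|$ of multiplicative derivatives of the weight, which is a Gowers-type quantity. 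It does not give your target bound in terms of correlations of $a$ itself with linear or polynomial phases. That upgrade is a genuine degree-lowering statement. For distinct-degree families it is exactly Ter\"av\"ainen's generalized von Neumann theorem (Theorem~\ref{t:teravainen}); your last sentence gestures at it, but you would be reproving it rather than using it.

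\textbf{How the paper fills it.} The paper transfers Theorem~\ref{t:teravainen} to orbits by dualizing against $h\in L^\infty$ and averaging $h(T^ax)$ over $a\leq M^d$ (Proposition~\ref{p:orbit}). This reduces everything to
\[
 \sup_{\deg Q\leq d}\Bigl|\E_{m\leq M}r_c(m)\e(Q(m))\Bigr|\ll M^{-\sigma}.
\]
That bound follows from Vaaler's approximation with $H=M^{\theta}$ and the $(d+1)$-st derivative test applied to $Q(m)\pm hm^\gamma$. The $(d+1)$-st derivative annihilates $Q$, so no differencing of the Hardy phase and no $\phi_h$ trick is needed. One then takes $c_*$ with $1-1/c<\beta/2$.

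Your alternative route through $\Delta_{\underline k}\e(hm^\gamma)$ is plausible for this particular weight, since the multi-differenced phase has second derivative $\asymp hk_1\cdots k_sm^{\gamma-s-2}$. But it would need an actual weighted PET with $\underline k$-dependent coefficients, removal of degenerate $\underline k$, and a saving larger than $1-\gamma$ uniformly in $h\leq H$. None of this is carried out, so as written the proof is incomplete at its core.
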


The usual PET induction bounds a weighted polynomial count by a 
Gowers norm. The inverse theorem then shows that a large polynomial count forces correlation
with a nilsequence.  In the case of averages along $\calS_c$, controlling
all such obstructions is unnecessary.  The appropriate input is the
following distinct-degree generalized von Neumann theorem of
Ter\"av\"ainen \cite[Theorem 4.1]{Teravainen}, which identifies ordinary polynomial phases as sufficient
obstructions. 

Write \([N]=\{1,\ldots,N\}\), \(\mathbf P=(P_1,\ldots,P_\ell)\), $\e(t):=e^{2\pi i t}$, $\sum_{m\le M} f(m) =\sum_{0<m\le M} f(m)$,
and \(\E_{m\leq M}=M^{-1}\sum_{m\leq M}\).
Unrestricted sums over \(x\) are over \(\Z\).
A function is \(1\)-bounded if its absolute value is at most \(1\).  For $w:[M]\to\C$, define
\begin{equation}\label{e:little-u}
 \|w\|_{u^s[M]}:=
 \sup_{\substack{\phi\in\mathbb R[y]\\ \deg\phi<s}}
 \left|\E_{m\leq M}w(m)\e(-\phi(m))\right|.
\end{equation}

\begin{theorem}[\cite{Teravainen}]
\label{t:teravainen}
Let $Q_1,\ldots,Q_\ell\in\Z[y]$ satisfy
$\deg Q_1<\cdots<\deg Q_\ell=d$, and let $C_0\geq1$.  There is an
integer $1\leq K\ll_d1$ such that the following holds.  If
$F_0,\ldots,F_\ell:\Z\to\C$ are $1$-bounded and supported on
$[-C_0M^d,C_0M^d]$, and $\theta:[M]\to\C$ is $1$-bounded, then
\begin{align}\label{e:generalized-vn}
 &\left|\frac1{M^{d+1}}\sum_{x\in\Z}\sum_{m\leq M}
 \theta(m)F_0(x)\prod_{i=1}^{\ell}F_i(x+Q_i(m))\right|\ll_{C_0,\mathbf Q}
 \bigl(M^{-1}+\|\theta\|_{u^{d+1}[M]}\bigr)^{1/K}.
\end{align}
\end{theorem}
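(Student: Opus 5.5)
The statement is Ter\"av\"ainen's distinct-degree generalized von Neumann theorem, cited as \cite[Theorem~4.1]{Teravainen}. The plan is therefore to reduce the displayed form to the cited one rather than to reprove it. For orientation we also sketch how the argument goes.

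\emph{Step 1: PET reduction.} Run PET induction (van der Corput differencing in $x$ and in $m$) on the weighted count
\[
\Lambda(\theta;F_0,\ldots,F_\ell):=M^{-d-1}\sum_x\sum_{m\le M}\theta(m)F_0(x)\prod_i F_i(x+Q_i(m)).
\]
Because the degrees are distinct, each differencing step reduces the complexity vector of the polynomial family. After boundedly many steps this gives
\[
|\Lambda|^{O(1)}\ll \|F_\ell\|_{U^s[C_0M^d]}+M^{-1}
\]
for some $s\ll_d1$, uniformly in the $1$-bounded weight $\theta$. The weight only produces differenced copies $\theta(m)\overline{\theta(m+h)}$, which are still $1$-bounded.

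\emph{Step 2: degree lowering.} Next pass from Gowers-norm control to control by $\|\theta\|_{u^{d+1}[M]}$, via Peluse's degree-lowering scheme. Define the dual function
\[
D(x):=\sum_m\theta(m)F_0(x-Q_\ell(m))\prod_{i<\ell}F_i(x-Q_\ell(m)+Q_i(m)).
\]
If $\Lambda$ is large, then $F_\ell$ correlates with $D$, so $\|D\|_{U^s}$ is large. Apply the $U^s$ inverse theorem, or better Peluse's argument, which avoids the full inverse theorem by differencing $s-2$ times and using the $U^2$ case with Fourier analysis. This shows that $F_\ell$ may be replaced by a major-arc phase. Inducting on $s$ lowers control down to $U^1$-type statements.

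\emph{Step 3: extracting the phase of $\theta$.} Once all $F_i$ are replaced by phases $\e(\alpha_i x)$, the count collapses to
\[
\E_m\theta(m)\,\e\Bigl(\textstyle\sum_i\alpha_i Q_i(m)\Bigr),
\]
which is bounded by $\|\theta\|_{u^{d+1}[M]}$ because $\deg\sum_i\alpha_iQ_i\le d$. Chaining the polynomial losses at each step gives the exponent $1/K$ with $K\ll_d1$.

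The main obstacle is the degree-lowering step (Step 2). One must keep $\theta$ along as an arbitrary weight throughout the differencing, while still obtaining polynomial bounds in the inverse-type step. This is precisely the content of \cite[Theorem~4.1]{Teravainen}, so in the paper I would invoke that result directly.

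What remains is bookkeeping: check that the normalization $M^{-(d+1)}$ and the support $[-C_0M^d,C_0M^d]$ match the cited formulation. This is done by rescaling $C_0$, with the implied constant depending on $C_0$ and $\mathbf Q$.
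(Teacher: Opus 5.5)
Your proposal is correct and takes the same approach as the paper, which gives no proof of its own and simply imports the statement from Ter\"av\"ainen's Theorem~4.1, with the normalization $M^{-(d+1)}$, the support $[-C_0M^d,C_0M^d]$, and the $C_0,\mathbf Q$-dependence of the constants handled as you describe. Your PET and degree-lowering outline is only heuristic orientation; nothing in the actual argument depends on it, since the result is invoked directly.
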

We shall use the
following immediate scaled form.  If $|g|\leq B$ on $[M]$ and
\begin{equation}\label{e:phase-hypothesis}
 \sup_{\deg\phi\leq d}
 \left|\E_{m\leq M}g(m)\e(\phi(m))\right|\leq E,
\end{equation}
then, for $B>0$, the left-hand side of~\eqref{e:generalized-vn}, with $g$ in place
of $\theta$, is
\begin{equation}\label{e:scaled-vn}
 \ll_{C_0,\mathbf Q}B\bigl(M^{-1}+B^{-1}E\bigr)^{1/K}.
\end{equation}
Indeed, apply Theorem~\ref{t:teravainen} to $g/B$.

We use the estimate in \eqref{e:phase-hypothesis} to approximate the contribution from $\mathbf{1}_{\mathcal{S}_{c}}$ with a more tractable weighted counting form.
Vaaler's approximation reduces the problem to exponential sums with phase
\[
 Q(m)+hm^{1/c},\qquad \deg Q\leq d.
\]
Note that the $(d+1)$-st derivative removes $Q$. Combining this power-saving comparison with the quantitative counting
result of Shao and Wang
\cite[Proposition~5.1 and Lemma 5.2]{ShaoWang} proves
Theorem~\ref{t:szemeredi}.  Its orbit version, together with the
pointwise theorem of Kosz--Mirek--Peluse--Wan--Wright
\cite[Theorem~1.3]{KMPWW}, proves Theorem~\ref{t:pointwise}.

For the ergodic application we record the corresponding orbit estimate.

\begin{proposition}\label{p:orbit}
Let $(X,\nu,T)$ be an invertible probability measure-preserving system,
and let $Q_1,\ldots,Q_\ell$ be as in
Theorem~\ref{t:teravainen}.  Suppose that $B>0$, that $|g|\leq B$ on
$[M]$, and that $g$ satisfies~\eqref{e:phase-hypothesis}.  Then
\begin{align}\label{e:orbit-transfer}
 &\left\|\E_{m\leq M}g(m)
 \prod_{i=1}^{\ell}f_i(T^{Q_i(m)}\cdot)\right\|_{L^1(X)}\ll_{\mathbf Q}B
 \bigl(M^{-1}+B^{-1}E\bigr)^{1/K}
 \prod_{i=1}^{\ell}\|f_i\|_\infty.
\end{align}
\end{proposition}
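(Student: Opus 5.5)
We deduce Proposition~\ref{p:orbit} from the scaled form~\eqref{e:scaled-vn} of Theorem~\ref{t:teravainen} by the usual Calder\'on transference argument. By homogeneity we may assume $\|f_i\|_\infty\leq1$ for every $i$. The first step is to dualise the $L^1$ norm. Put
\[
 \Phi(y):=\E_{m\leq M}g(m)\prod_{i=1}^{\ell}f_i(T^{Q_i(m)}y),
\]
and choose a measurable $f_0$ with $|f_0|\leq1$ and $f_0\Phi=|\Phi|$. Then
\[
 \|\Phi\|_{L^1(X)}=\int_X f_0\,\Phi\,d\nu.
\]

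Next we average over a long orbit segment. Fix a large integer $L\geq C_1M^d$, where $C_1$ is chosen so that $|Q_i(m)|\leq C_1M^d$ for all $m\leq M$ and all $i$. By $T$-invariance of $\nu$,
\[
 \int_X f_0\Phi\,d\nu=\frac1L\sum_{x\in[L]}\int_X f_0(T^xy)\,\Phi(T^xy)\,d\nu(y).
\]
For each fixed $y$, define functions on $\Z$ by
\[
 F_i^y(z):=f_i(T^zy)\1_{[-C_1M^d,\,L+C_1M^d]}(z)\quad(0\leq i\leq \ell),
\]
and restrict $x$ to $[L]$ by inserting $\1_{[L]}(x)$ into $F_0^y$. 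For $x\in[L]$ and $m\leq M$, every point $x+Q_i(m)$ lies in the window, so the truncation costs nothing. Hence the integrand equals
\[
 \frac1L\sum_{x\in\Z}\sum_{m\leq M}\frac{g(m)}{M}\,F_0^y(x)\prod_{i=1}^{\ell}F_i^y(x+Q_i(m)).
\]

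The main obstacle is that these $F_i^y$ are supported on an interval of length about $L$, not $O(M^d)$, so Theorem~\ref{t:teravainen} does not apply to them directly. We handle this by partitioning $[L]$ into blocks $I_j$ of length $M^d$. On each block we apply~\eqref{e:scaled-vn} to $F_0^y\1_{I_j}$ and to the $F_i^y$ truncated to $I_j+[-C_1M^d,C_1M^d]$, after translating everything to the origin. The translated functions are $1$-bounded and supported on $[-C_0M^d,C_0M^d]$ with $C_0=2C_1+1$. Translation does not affect the sum, since the sum over $x$ ranges over all of $\Z$.

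Each block therefore contributes at most
\[
 \ll M^{d+1}\cdot B\bigl(M^{-1}+B^{-1}E\bigr)^{1/K}\cdot M^{-1}.
\]
Here the factor $M^{-1}$ comes from the normalisation $g/M$. Summing over the roughly $L/M^d$ blocks and dividing by $L$ gives
\[
 \ll B\bigl(M^{-1}+B^{-1}E\bigr)^{1/K}
\]
uniformly in $y$ and $L$. The constant $C_0$ depends only on $\mathbf Q$, so the implied constant depends only on $\mathbf Q$. The only places where edge effects might enter are the block boundaries and the ends of $[L]$. No error arises at the block boundaries, because the truncation windows already include all shifts $x+Q_i(m)$. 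The ends of $[L]$ need no separate treatment either, for the same reason; in any case they would contribute only $O(M^d/L)$, which vanishes as $L\to\infty$.

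Finally we integrate in $y$ and restore the factor $\prod_{i=1}^{\ell}\|f_i\|_\infty$ by homogeneity. This gives~\eqref{e:orbit-transfer}.
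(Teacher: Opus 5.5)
Your proposal is correct and follows essentially the paper's proof: dualise the $L^1$ norm, average over an orbit segment using $T$-invariance, and apply the scaled form \eqref{e:scaled-vn} pointwise in $y$ to truncated orbit functions. The one difference is that you take $L$ large and cut $[L]$ into blocks of length $M^d$, whereas the paper takes $L=M^d$, so $\frac1L\sum_{a\leq L}\E_{m\leq M}$ already equals the normalisation $M^{-(d+1)}\sum_{x}\sum_{m\leq M}$ of the theorem and a single application of \eqref{e:scaled-vn} suffices, with no blocks needed.
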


\begin{proof}
By homogeneity assume $\|f_i\|_\infty\leq1$.  Since the
class of polynomial phases of degree at most $d$ is closed under
negation, \eqref{e:phase-hypothesis} gives
\[
 \|g\|_{u^{d+1}[M]}\leq E.
\]
Put $L=M^d$.  By $L^1$--$L^\infty$ duality, there is
$h\in L^\infty(X)$ with $\|h\|_\infty\leq1$ such that
\begin{align*}
 &\left\|\E_{m\leq M}g(m)
   \prod_{i=1}^{\ell}f_i(T^{Q_i(m)}\cdot)\right\|_{L^1(X)}=
 \left|\int_X h(x)\E_{m\leq M}g(m)
   \prod_{i=1}^{\ell}f_i(T^{Q_i(m)}x)\,d\nu(x)\right|.
\end{align*}
Since $\nu$ is $T$-invariant, averaging the integral over
$1\leq a\leq L$ gives
\begin{align*}
 &\left\|\E_{m\leq M}g(m)
   \prod_{i=1}^{\ell}f_i(T^{Q_i(m)}\cdot)\right\|_{L^1(X)}\\
 &\qquad=
 \left|\int_X\frac1L\sum_{a\leq L}h(T^ax)
   \E_{m\leq M}g(m)
   \prod_{i=1}^{\ell}f_i(T^{a+Q_i(m)}x)\,d\nu(x)\right|.
\end{align*}
For fixed $x$, define $F_0(a)=h(T^ax)\1_{[L]}(a)$ and define
$F_i(b)=f_i(T^bx)$ on an interval containing all $a+Q_i(m)$ with
$a\in[L]$ and $m\in[M]$, extending by zero elsewhere.  These intervals
may be chosen inside $[-C_0M^d,C_0M^d]$ for some
$C_0=C_0(\mathbf Q)$.  Thus the functions are $1$-bounded and have the
support required in Theorem~\ref{t:teravainen}.  Since $L=M^d$,
\eqref{e:scaled-vn} gives, uniformly in $x$,
\[
 \left|\frac1L\sum_{a\leq L}h(T^ax)\E_{m\leq M}g(m)
 \prod_{i=1}^{\ell}f_i(T^{a+Q_i(m)}x)\right|
 \ll_{C_0,\mathbf Q}B\bigl(M^{-1}+B^{-1}E\bigr)^{1/K}.
\]
Integrating this estimate over $X$ proves~\eqref{e:orbit-transfer}.
\end{proof}

\subsection*{Acknowledgments}
The author thanks Michael Lacey and Anastasios Fragkos for helpful
comments and discussions.  ChatGPT was used as an auxiliary tool for
proofreading and preliminary literature searches.  The author takes
full responsibility for all mathematical statements, references,
arguments, and final wording.

\section{A nonlinear Szemer\'edi theorem}

Put $\gamma=1/c$.  The map $n\mapsto\lfloor n^c\rfloor$ is strictly
increasing, since $(n+1)^c-n^c>1$.  Define
\begin{equation}\label{e:weights}
 \chi_c(m):=\1_{\calS_c}(m),\qquad
 \Delta_c(m):=(m+1)^\gamma-m^\gamma,\qquad
 r_c(m):=\chi_c(m)-\Delta_c(m).
\end{equation}
The exact counting identity
\[
 \chi_c(m)=\lceil(m+1)^\gamma\rceil-\lceil m^\gamma\rceil
\]
and $\lceil t\rceil=t+\psi(-t)+1/2$, where
$\psi(t)=\{t\}-1/2$, yield
\begin{equation}\label{e:sawtooth}
 r_c(m)=\psi(-(m+1)^\gamma)-\psi(-m^\gamma).
\end{equation}
We shall also use
\begin{equation}\label{e:delta-properties}
 0<\Delta_c(m)<1,\qquad |r_c(m)|\leq1,
 \qquad \sum_{m\leq M}\Delta_c(m)=(M+1)^\gamma-1.
\end{equation}

We require two standard exponential-sum tools. The first is the following consequence of Vaaler's approximation to the sawtooth function \cite[Theorem~18, especially~(7.14)]{Vaaler}. We include a short proof for completeness.

\begin{lemma}[Vaaler approximation]\label{l:vaaler}
For every integer $H\geq2$ there are coefficients $a_h,b_h$, with
$0<|h|\leq H$, and a nonnegative trigonometric polynomial
\[
 B_H(t)=b_0+\sum_{0<|h|\leq H}b_h\e(ht),
\]
such that
\[
 |a_h|\ll |h|^{-1},\qquad b_0\ll H^{-1},\qquad
 |b_h|\ll H^{-1},
\]
and
\[
 \left|\psi(t)-\sum_{0<|h|\leq H}a_h\e(ht)\right|
 \leq B_H(t)
\]
for every real $t$.
\end{lemma}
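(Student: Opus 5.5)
The natural route is Vaaler's construction via the Beurling--Selberg extremal problem; we only need the consequence, not the sharp constants. Since $\psi$ is odd, has jump discontinuities at the integers, and has Fourier coefficients $-1/(2\pi i h)$ for $h\neq0$, we proceed in three steps.

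\textbf{Step 1 (the trigonometric polynomial).} Consider Vaaler's trigonometric polynomial of degree $H$,
\[
 \psi^*_H(t)=\sum_{0<|h|\leq H}a_h\e(ht),\qquad a_h=-\frac{1}{2\pi i h}\,\widehat J(h/(H+1)),
\]
with the explicit weight
\[
 \widehat J(u)=\pi u(1-|u|)\cot(\pi u)+|u|,\qquad |u|<1 .
\]
Since $0\le\widehat J\le1$, this gives $|a_h|\ll|h|^{-1}$ immediately.

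\textbf{Step 2 (the error bound).} Vaaler's Theorem~18 states
\[
 |\psi(t)-\psi^*_H(t)|\le\frac{1}{2H+2}\sum_{|h|\le H}\Bigl(1-\frac{|h|}{H+1}\Bigr)\e(ht).
\]
The right-hand side is $(2H+2)^{-1}$ times the Fej\'er kernel of order $H$. That kernel is nonnegative, being $\frac1{H+1}\bigl|\sum_{j=0}^{H}\e(jt)\bigr|^2$. So we take
\[
 B_H(t)=\frac{1}{2H+2}\sum_{|h|\le H}\Bigl(1-\frac{|h|}{H+1}\Bigr)\e(ht).
\]
Its coefficients satisfy $b_0=(2H+2)^{-1}\ll H^{-1}$ and $|b_h|\le(2H+2)^{-1}\ll H^{-1}$. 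Both requirements are then satisfied.

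\textbf{Step 3 (proof of the inequality).} For the short self-contained proof we would derive the inequality from the Beurling--Selberg majorant and minorant of $\operatorname{sgn}$, periodized. Concretely, one builds entire functions of exponential type $2\pi(H+1)$ that majorize and minorize $\psi$ on one period. Their periodizations are trigonometric polynomials of degree at most $H$ whose difference has integral $1/(H+1)$. Then $\psi^*_H$ is taken as their average, and $B_H$ as half their difference.

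\textbf{Main obstacle.} Verifying nonnegativity and the pointwise inequality without Vaaler's machinery is the only real difficulty. This is the positivity of the Beurling function's defect, namely
\[
 B(z)-\operatorname{sgn}(z)\ge0,\quad\text{with}\quad\int(B-\operatorname{sgn})=1.
\]
Here we would simply cite Vaaler~(7.14) for that identity rather than reprove it. After that, everything reduces to the coefficient bookkeeping above.
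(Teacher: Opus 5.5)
Your construction matches the paper's. You cite Vaaler's Theorem~18/(7.14), take $a_h=-(2\pi i h)^{-1}\widehat J(h/(H+1))$, take $B_H$ to be $(2H+2)^{-1}$ times the Fej\'er kernel, and get $|a_h|\ll|h|^{-1}$ and $b_0,|b_h|\le(2H+2)^{-1}$ from $0\le\widehat J\le1$. Step~3 does no work, since you end up citing (7.14) anyway.

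\textbf{Gap: the integers.} One step does not go through as written. Vaaler's Theorem~18 concerns the \emph{normalized} sawtooth $\widetilde\psi$, which equals $\{t\}-\tfrac12$ off $\Z$ but $0$ on $\Z$. The lemma concerns $\psi(t)=\{t\}-\tfrac12$, which equals $-\tfrac12$ at every integer. So your opening claim that $\psi$ is odd is false at the integers for this convention. The inequality you quote from Vaaler therefore does not give $|\psi(t)-\psi^*_H(t)|\le B_H(t)$ for $t\in\Z$. This is not a measure-zero technicality. The lemma is applied pointwise at $t=-m^\gamma$ and $t=-(m+1)^\gamma$ for every $m\le M$, and $t=-1\in\Z$ already occurs at $m=1$.

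\textbf{Repair.} The fix is short, and it is exactly the extra step in the paper's proof. Since $\widehat J$ is even, $a_{-h}=-a_h$. Hence for $t\in\Z$ you get $\sum_{0<|h|\le H}a_h\e(ht)=\sum_{h=1}^{H}(a_h+a_{-h})=0$. Also $\sum_{|h|\le H}\bigl(1-\frac{|h|}{H+1}\bigr)=H+1$, so $B_H(t)=\tfrac12$ at every integer. At integers the left-hand side is therefore $|\psi(t)|=\tfrac12=B_H(t)$. The inequality holds there with equality, so there is no slack to wave away. With this check added, your proof is complete.
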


\begin{proof}
Vaaler uses the normalized sawtooth function
\[
 \widetilde\psi(t)=
 \begin{cases}
  \{t\}-\frac12,&t\notin\Z,\\
  0,&t\in\Z.
 \end{cases}
\]
Taking $N=H$ in \cite[Theorem~18]{Vaaler}, define
\[
 a_h:=(-2\pi i h)^{-1}\widehat J_{H+1}(h),
 \qquad 0<|h|\leq H.
\]
Then \cite[(7.14)]{Vaaler} gives
\[
 \left|\widetilde\psi(t)-\sum_{0<|h|\leq H}a_h\e(ht)\right|
 \leq \frac1{2H+2}k_H(t),
\]
where
\[
 k_H(t)=\sum_{|h|\leq H}
 \left(1-\frac{|h|}{H+1}\right)\e(ht)
\]
is the Fej\'er kernel.  Set $B_H=(2H+2)^{-1}k_H$.  The properties of
$\widehat J$ in \cite[Theorem~6]{Vaaler} give
$|\widehat J_{H+1}(h)|\leq1$, and hence
$|a_h|\ll|h|^{-1}$.  Moreover,
\[
 b_h=\frac1{2H+2}\left(1-\frac{|h|}{H+1}\right)
 \qquad (|h|\leq H),
\]
so $b_0,|b_h|\ll H^{-1}$. Finally, our convention $\psi(t)=\{t\}-1/2$ differs from
$\widetilde\psi$ only on $\Z$.  Since $\widehat J$ is even, we have
$a_{-h}=-a_h$.  Thus, if $t\in\Z$, then
\[
 \sum_{0<|h|\leq H}a_h\e(ht)
 =\sum_{h=1}^H(a_h+a_{-h})=0.
\]
Moreover, $k_H(t)=H+1$, and hence $B_H(t)=1/2$.  Since
$\psi(t)=-1/2$ at such points, the required inequality holds there as
well.
\end{proof}
The second is the following form of the classical higher derivative
test; see \cite[Lemma~2.2]{ChanKumchevWierdl}.

\begin{lemma}[Higher derivative test]\label{l:derivative-test}
Let $s\geq2$, let $J$ be an interval of $L$ integers, and suppose that
$F$ is $s$ times continuously differentiable on the convex hull of $J$.
If
\[
 \lambda\leq|F^{(s)}(t)|\leq C_1\lambda
\]
throughout that interval, then
\begin{equation}\label{e:derivative-test}
 \left|\sum_{m\in J}\e(F(m))\right|
 \ll_{s,C_1}L\left(\lambda^{1/(2^s-2)}+L^{-2/2^s}
 +(L^s\lambda)^{-2/2^s}\right).
\end{equation}
\end{lemma}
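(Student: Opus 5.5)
This is the classical van der Corput $A$-process argument; I would prove Lemma~\ref{l:derivative-test} by induction on $s$. The plan has three parts: a second-derivative base case, a Weyl--van der Corput differencing step that lowers the derivative order by one, and a choice of shift parameter $H$ that balances the resulting terms. The exponents $1/(2^s-2)$ and $2/2^s$ are exactly what one gets from iterating $|S|^2\ll L^2/H+(L/H)\sum_{1\le h<H}|S_h|$.

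\emph{Base case $s=2$.} Here the claimed bound reads $L(\lambda^{1/2}+L^{-1/2}+(L^2\lambda)^{-1/2})$. I would obtain it from the classical second-derivative test (the $B$-process, or Kusmin--Landau applied after splitting $J$ according to the values of $F'$), which gives
\[
\sum_{m\in J}\e(F(m))\ll_{C_1} L\lambda^{1/2}+\lambda^{-1/2}.
\]
The term $\lambda^{-1/2}$ equals $L(L^2\lambda)^{-1/2}$, so this is at most the right-hand side of~\eqref{e:derivative-test}.

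\emph{Inductive step.} Assume the case $s-1\ge2$ and take $F$ with $\lambda\le|F^{(s)}|\le C_1\lambda$. For $1\le H\le L$, the Weyl--van der Corput inequality gives
\[
\Bigl|\sum_{m\in J}\e(F(m))\Bigr|^2\ll \frac{L^2}{H}+\frac{L}{H}\sum_{1\le h<H}\Bigl|\sum_{m\in J_h}\e(F(m+h)-F(m))\Bigr|,
\]
where $J_h$ is an interval of at most $L$ integers. The difference $G_h(t)=F(t+h)-F(t)$ satisfies $G_h^{(s-1)}(t)=hF^{(s)}(\xi)$ for some $\xi$ between $t$ and $t+h$. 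Because $F^{(s)}$ is continuous and never vanishes, it has constant sign, so
\[
h\lambda\le|G_h^{(s-1)}|\le C_1h\lambda.
\]
The inductive hypothesis therefore applies with $\lambda$ replaced by $h\lambda$. Summing over $h<H$ and taking square roots produces terms of the shape
\[
LH^{-1/2},\quad L(H\lambda)^{1/(2(2^{s-1}-2))},\quad L\cdot L^{-1/2^{s-1}},\quad L(L^{s-1}\lambda)^{-1/2^{s-1}}.
\]
The last one uses $h\ge1$, which only costs constants. The third term is already $\le L\cdot L^{-2/2^s}$.

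\emph{Choosing $H$.} This is the step I expect to be the main obstacle, together with the boundary cases. The first two terms balance at $H\approx\lambda^{-1/(2^{s-1}-1)}$, which gives
\[
LH^{-1/2}\approx L\lambda^{1/(2^s-2)},
\]
the desired first term. The choice is legitimate only when $1\le H\le L$. If this $H$ would exceed $L$, i.e.\ $\lambda<L^{-(2^{s-1}-1)}$, I take $H=L$ instead. Then $LH^{-1/2}=L^{1/2}$, which is already admissible, but the term $L(L\lambda)^{1/(2(2^{s-1}-2))}$ must be compared with the $(L^s\lambda)^{-2/2^s}$ term. I also need the remaining inductive term, $L(L^{s-1}\lambda)^{-1/2^{s-1}}$, to be bounded by $L(L^s\lambda)^{-2/2^s}$, perhaps up to the other terms. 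These comparisons are elementary inequalities between powers of $L$ and $\lambda$. If $\lambda\ge1$, the first term already exceeds $L$ and the bound is trivial; in the remaining range one checks which term dominates in each of the regimes $\lambda\ge L^{-(2^{s-1}-1)}$ and $\lambda<L^{-(2^{s-1}-1)}$. If these bookkeeping inequalities fail to close, I would simply cite \cite[Lemma~2.2]{ChanKumchevWierdl} for the stated form, since the paper already takes the lemma from there.
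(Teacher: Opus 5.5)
The paper does not prove this lemma; it quotes it from Chan--Kumchev--Wierdl, so your fallback is exactly what the paper does. Your induction is the standard route, but as written it does not close, and the problem is not just bookkeeping.

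\textbf{The lossy step.} The step ``the last one uses $h\geq1$, which only costs constants'' discards exactly the gain that produces the third term of \eqref{e:derivative-test}. Afterwards your term $L(L^{s-1}\lambda)^{-1/2^{s-1}}$ no longer depends on $H$, and the comparison you hope for is false. At $\lambda=L^{-(s-1)}$ it equals $L$ for every $H$, whereas the right-hand side of \eqref{e:derivative-test} is $\asymp L^{1-2^{1-s}}$.

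This is precisely the regime the paper uses. In Lemma~\ref{l:fractional-phase} one has $s=d+1$, $L\asymp Y$ and $\lambda\asymp|h|Y^{\gamma-d-1}$. So $L^s\lambda\asymp|h|Y^{\gamma}$ is large, while $L^{s-1}\lambda\asymp|h|Y^{\gamma-1}\leq1$ for $|h|\leq Y^{1-\gamma}$, and your weakened term gives nothing.

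You must keep $h$. Since $2/2^{s-1}<1$ for $s\geq3$,
\[
 \frac LH\sum_{h<H}L(L^{s-1}h\lambda)^{-2/2^{s-1}}\ll L^2(L^{s-1}H\lambda)^{-2/2^{s-1}}.
\]
So with $K=2^{s-1}$ and $1\leq H\leq L$,
\[
 \Bigl|\sum_{m\in J}\e(F(m))\Bigr|^2\ll L^2\Bigl(H^{-1}+(H\lambda)^{1/(K-2)}+L^{-2/K}+(L^{s-1}H\lambda)^{-2/K}\Bigr).
\]
Replacing $|J_h|$ by $L$ is harmless, because the inductive bound is nondecreasing in the length.

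\textbf{The choice of $H$.} Even with this corrected inequality, your two choices $H\asymp\lambda^{-1/(K-1)}$ and $H=L$ do not suffice. For $s=3$ and $\lambda=L^{-7/4}$ they give only $L^{19/24}$ and $L^{13/16}$, whereas \eqref{e:derivative-test} asserts $O(L^{3/4})$. That bound needs $H\asymp(L\lambda)^{-1}=L^{3/4}$, which balances the second and fourth terms in the bracket.

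In general, first discard the trivial ranges $\lambda>1$ and $\lambda<L^{-s}$. Then take:
\begin{itemize}
 \item $H\asymp\lambda^{-1/(K-1)}$ if $\lambda\geq L^{-2(K-1)/K}$;
 \item $H\asymp\max\{L^{2/K},(L^{s-2}\lambda)^{-1}\}$ if $L^{-(s-1)}\leq\lambda\leq L^{-2(K-1)/K}$;
 \item $H=L$ if $\lambda\leq L^{-(s-1)}$.
\end{itemize}
In each regime every term in the bracket is $\ll\lambda^{1/(K-1)}+L^{-2/K}+(L^s\lambda)^{-2/K}$. The only numerical input needed is $s\geq4-2^{3-s}$, which holds for all $s\geq3$, with equality at $s=3$. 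Taking square roots then gives \eqref{e:derivative-test}.
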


The required arithmetic input is the following uniform estimate for the
correlation of the Piatetski--Shapiro sampling discrepancy with
polynomial phases.

\begin{lemma}\label{l:fractional-phase}
Fix $d\geq1$ and $1<c_0<2$.  There are $\theta,\sigma_0>0$, depending
only on $d,c_0$, such that the following holds.  If $1<c<c_0$,
$\gamma=1/c$, $Q\in\mathbb R[y]$ has degree at most $d$, and
$1\leq|h|\leq M^\theta$, then
\begin{equation}\label{e:fractional-sum}
 \left|\sum_{m\leq M}\e(Q(m)+hm^\gamma)\right|
 \ll_{c,d}M^{1-\sigma_0}.
\end{equation}
The same estimate holds with $m^\gamma$ replaced by $(m+1)^\gamma$.
\end{lemma}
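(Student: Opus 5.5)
The plan is to apply the higher derivative test, Lemma~\ref{l:derivative-test}, with $s=d+1$ on dyadic blocks. Differentiating $d+1$ times removes $Q$ entirely and leaves only the fractional term. Put $F(t)=Q(t)+ht^\gamma$. Then
\[
F^{(d+1)}(t)=h\gamma(\gamma-1)\cdots(\gamma-d)\,t^{\gamma-d-1}.
\]
Since $0<\gamma<1$, the coefficient $\gamma(\gamma-1)\cdots(\gamma-d)$ is nonzero. It has modulus between two positive constants depending only on $d$ and $c_0$: the smallest factor is $\gamma\ge 1/c_0>1/2$, and $1-\gamma\ge 1-\gamma_{\min}$ could be small. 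Indeed $1-\gamma$ tends to $0$ as $c\to1$, so the lower bound degenerates. This is why the implied constant is allowed to depend on $c$, or one tracks the factor $(1-\gamma)$ explicitly.

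First I would discard $m\le M^{1-\eta}$ trivially, at cost $M^{1-\eta}$. I then split $(M^{1-\eta},M]$ into $O(\log M)$ dyadic intervals $J=(L,2L]$. On each $J$ we have $\lambda\le|F^{(d+1)}(t)|\le C_1\lambda$ with
\[
\lambda\asymp_{c,d}|h|L^{\gamma-d-1},\qquad C_1=2^{d+1-\gamma}.
\]

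Next I insert this into \eqref{e:derivative-test} with $s=d+1$ and check the three terms.
\begin{itemize}
\item The term $L^{-2/2^{d+1}}$ is a power saving.
\item The term $(L^{d+1}\lambda)^{-2/2^{d+1}}\asymp(|h|L^\gamma)^{-2/2^{d+1}}\le L^{-2\gamma/2^{d+1}}$ is also a power saving, using $|h|\ge1$ and $\gamma>1/2$.
\item For $\lambda^{1/(2^{d+1}-2)}$ we use $|h|\le M^\theta\le L^{\theta/(1-\eta)}$, so $\lambda\ll L^{\theta/(1-\eta)+\gamma-d-1}$. Since $d\ge1$ and $\gamma<1$, the exponent $\gamma-d-1<-1$. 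Choosing $\theta$ small, say $\theta=1/4$ with $\eta=1/2$, makes $\lambda\ll L^{-1/2}$, so this term saves $L^{-1/(2(2^{d+1}-2))}$.
\end{itemize}

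Each dyadic block therefore contributes $\ll L^{1-\sigma_1}$ with $\sigma_1=\sigma_1(d)>0$. Summing over blocks with $L\ge M^{1-\eta}$ gives $\ll M^{1-\sigma_1}$; the sum is geometric, so the logarithm is absorbed. Together with the trivial piece, this yields $\sigma_0=\min(\eta,\sigma_1)/2$, which depends only on $d$ and $c_0$.

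For $(m+1)^\gamma$ the same computation applies to $F(t)=Q(t)+h(t+1)^\gamma$. Its $(d+1)$-st derivative is $h\gamma\cdots(\gamma-d)(t+1)^{\gamma-d-1}$, which on $J$ is comparable to the same $\lambda$. Alternatively, shift $m\mapsto m-1$: $Q(m-1)$ is again a polynomial of degree at most $d$, and only boundary terms change, at cost $O(1)$.

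The only delicate point is uniformity. The constant from the coefficient $\gamma(\gamma-1)\cdots(\gamma-d)$ involves $1-\gamma$, which can approach $0$. This only affects $C_1$-independent lower bounds through the implied constant, and the statement allows this since the bound is $\ll_{c,d}$. The exponents $\theta,\sigma_0$ themselves come out independent of $c$.
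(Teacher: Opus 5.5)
Your proposal is correct and follows the paper's proof essentially step for step. Both arguments bound $m\le M^{1/2}$ trivially and split the rest into dyadic blocks. Both apply the higher derivative test with $s=d+1$ so that $Q$ disappears, run the same three-term check, and handle the $(m+1)^\gamma$ variant by the shift $r=m+1$. Your fixed choice $\theta=1/4$ instead of the paper's $\theta<d/4$ is immaterial. One tidying point: you first say the coefficient $\gamma(\gamma-1)\cdots(\gamma-d)$ lies between constants depending only on $d,c_0$, then retract this. The correct statement is that its upper bound depends only on $d$, while its lower bound degenerates like $1-\gamma$. It therefore affects only the implied constant in the third term, which is exactly what the paper's $\asymp_{c,d}$ permits.
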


\begin{proof}
Choose $0<\theta<d/4$.  The range $m\leq M^{1/2}$ contributes
$O(M^{1/2})$.  Partition the remaining range into dyadic intervals
$J\subset[Y,2Y]$, where $Y\geq M^{1/2}$.

Since the $(d+1)$-st derivative of $Q$ vanishes, for $t\in[Y,2Y]$ we
have
\[
 \left|\frac{d^{d+1}}{dt^{d+1}}
 \bigl(Q(t)+ht^\gamma\bigr)\right|
 \asymp_{c,d}|h|Y^{\gamma-d-1}.
\]
Lemma~\ref{l:derivative-test} therefore gives
\begin{align*}
 \frac1{|J|}\left|\sum_{m\in J}\e(Q(m)+hm^\gamma)\right|
 \ll_{c,d}{}&
 |h|^{1/(2^{d+1}-2)}
 Y^{-(d+1-\gamma)/(2^{d+1}-2)}\\
 &+Y^{-2/2^{d+1}}+(|h|Y^\gamma)^{-2/2^{d+1}}.
\end{align*}
Since $Y\geq M^{1/2}$, $|h|\leq M^\theta$,
$d+1-\gamma\geq d$, and $\gamma\geq1/c_0$, these terms are bounded,
respectively, by
\[
 M^{-(d/2-\theta)/(2^{d+1}-2)},\qquad
 M^{-1/2^{d+1}},\qquad M^{-1/(c_0 2^{d+1})}.
\]
Summing over the dyadic intervals and combining this with the initial
range, and then choosing $\sigma_0>0$ smaller than the exponents
obtained above, proves~\eqref{e:fractional-sum}.

Finally, after the change of variables $r=m+1$, the sum with
$(m+1)^\gamma$ has polynomial part $Q(r-1)$ and differs from a sum of
the form~\eqref{e:fractional-sum} by at most two boundary terms.
\end{proof}

\begin{proposition}[Polynomial-phase cancellation]\label{p:phase}
Fix $d\geq1$ and $1<c_0<2$.  There is $\sigma>0$, depending only on
$d,c_0$, such that, for every fixed $1<c<c_0$ and all \(M\geq M_0(c)\),
\begin{equation}\label{e:phase-cancellation}
 \sup_{\deg Q\leq d}
 \left|\E_{m\leq M}r_c(m)\e(Q(m))\right|
 \ll_{c,d}M^{-\sigma},
\end{equation}
where \(r_c\) is defined in~\eqref{e:weights}.
\end{proposition}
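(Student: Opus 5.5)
We follow the standard Vaaler route. By \eqref{e:sawtooth},
\[
 \sum_{m\leq M}r_c(m)\e(Q(m))=\sum_{m\leq M}\psi(-(m+1)^\gamma)\e(Q(m))-\sum_{m\leq M}\psi(-m^\gamma)\e(Q(m)),
\]
so it suffices to show $\sum_{m\leq M}\psi(-f(m))\e(Q(m))\ll M^{1-\sigma}$ uniformly in $\deg Q\leq d$, for $f(m)=m^\gamma$ and $f(m)=(m+1)^\gamma$. We apply Lemma~\ref{l:vaaler} with $H=\lfloor M^{\theta}\rfloor$, where $\theta$ is from Lemma~\ref{l:fractional-phase}. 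Writing $\psi=\sum_{0<|h|\leq H}a_h\e(h\,\cdot)+E_H$ with $|E_H|\leq B_H$, the main part becomes
\[
 \sum_{0<|h|\leq H}a_h\sum_{m\leq M}\e(Q(m)-hf(m)).
\]
Lemma~\ref{l:fractional-phase}, applied with $-h$ in place of $h$ (the bound holds for $1\leq|h|\leq M^\theta$ of either sign), gives $M^{1-\sigma_0}$ uniformly in $Q$ for each $h$. Since $\sum_{0<|h|\leq H}|a_h|\ll\log H$, the main part is $\ll M^{1-\sigma_0}\log M$.

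For the error part, the point is that $B_H\geq0$, so we may take absolute values \emph{inside} the sum and discard the phase $\e(Q(m))$ entirely:
\[
 \Bigl|\sum_{m\leq M}E_H(-f(m))\e(Q(m))\Bigr|\leq\sum_{m\leq M}B_H(-f(m))\leq b_0M+\sum_{0<|h|\leq H}|b_h|\Bigl|\sum_{m\leq M}\e(-hf(m))\Bigr|.
\]
The first term is $\ll M/H\ll M^{1-\theta}$. For the inner sums we invoke Lemma~\ref{l:fractional-phase} with $Q=0$, which gives $M^{1-\sigma_0}$ each. Since $\sum_{0<|h|\leq H}|b_h|\ll1$, these contribute $O(M^{1-\sigma_0})$.

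Dividing by $M$ and choosing any $\sigma<\min(\theta,\sigma_0)$ absorbs the logarithm for $M\geq M_0(c)$, and yields \eqref{e:phase-cancellation}. The implied constants depend only on $c,d$, since Lemma~\ref{l:fractional-phase} is uniform in $Q$, which also makes the bound uniform over the supremum in $Q$.

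The only genuinely delicate point is the error term. One must use the nonnegativity of the majorant $B_H$ to eliminate the polynomial phase, since otherwise $E_H$ is not a trigonometric polynomial we can control directly. Once that is done, the remaining sums are covered by the $Q=0$ case of Lemma~\ref{l:fractional-phase}. The sign of $h$ and the shift $(m+1)^\gamma$ are already handled by that lemma.
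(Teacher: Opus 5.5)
Your proposal is correct and follows essentially the same route as the paper. Both arguments apply Vaaler's approximation with $H=\lfloor M^\theta\rfloor$, bound the main terms by Lemma~\ref{l:fractional-phase} to get $M^{1-\sigma_0}\log M$, and bound the error by Fourier-expanding the majorant $B_H$ and using the $Q=0$ case of that lemma, giving $M^{1-\theta}+M^{1-\sigma_0}$ and hence any $\sigma<\min\{\theta,\sigma_0\}$.
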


\begin{proof}
Apply Lemma~\ref{l:vaaler} to the two sawtooth functions in
\eqref{e:sawtooth}, with $H=\lfloor M^\theta\rfloor$ and $\theta$ as
in Lemma~\ref{l:fractional-phase}.  We obtain
\begin{align*}
 \bigg|r_c(m)-\sum_{1\leq|h|\leq H}a_h\bigl(
 \e(-h(m+1)^\gamma)-\e(-hm^\gamma)\bigr)\bigg|
 \leq B_H(-(m+1)^\gamma)+B_H(-m^\gamma).
\end{align*}
Given $Q\in\mathbb R[y]$ with $\deg Q\leq d$, multiply by
$\e(Q(m))$ and sum over $m\leq M$.  It follows that
\begin{align}\label{e:rcmopen}
 \left|\sum_{m\leq M}r_c(m)\e(Q(m))\right|
 &\leq\bigg|\sum_{m\leq M}\e(Q(m))
 \sum_{1\leq|h|\leq H}a_h\bigl(
 \e(-h(m+1)^\gamma)-\e(-hm^\gamma)\bigr)\bigg|\nonumber\\
 &\quad+\sum_{m\leq M}\bigl(
 B_H(-(m+1)^\gamma)+B_H(-m^\gamma)\bigr).
\end{align}
By the triangle inequality and $|a_h|\ll|h|^{-1}$, the first term is
at most
\begin{align*}
 \ll\sum_{1\leq|h|\leq H}\frac1{|h|}\bigg(&
 \left|\sum_{m\leq M}\e(Q(m)-hm^\gamma)\right|
 +\left|\sum_{m\leq M}\e(Q(m)-h(m+1)^\gamma)\right|\bigg)\\
 &\ll_{c,d}M^{1-\sigma_0}
 \sum_{1\leq|h|\leq H}\frac1{|h|}
 \ll_{c,d}M^{1-\sigma_0}\log M,
\end{align*}
by Lemma~\ref{l:fractional-phase}.

For the second term in~\eqref{e:rcmopen}, the Fourier expansion of
$B_H$ and the triangle inequality give
\begin{align*}
 &\sum_{m\leq M}\bigl(
 B_H(-(m+1)^\gamma)+B_H(-m^\gamma)\bigr)\\
 &\qquad\leq 2Mb_0+
 \sum_{1\leq|h|\leq H}|b_h|
 \left(\left|\sum_{m\leq M}\e(-hm^\gamma)\right|
 +\left|\sum_{m\leq M}\e(-h(m+1)^\gamma)\right|\right)\\
 &\qquad\ll \frac{M}{H}+\frac1H
 \sum_{1\leq|h|\leq H}
 \left(\left|\sum_{m\leq M}\e(-hm^\gamma)\right|
 +\left|\sum_{m\leq M}\e(-h(m+1)^\gamma)\right|\right)\\
 &\qquad\ll_{c,d}M^{1-\theta}+M^{1-\sigma_0},
\end{align*}
where the last estimate follows from
Lemma~\ref{l:fractional-phase} with $Q=0$.  Dividing by $M$, taking the
supremum over $Q$, and choosing
$0<\sigma<\min\{\theta,\sigma_0\}$ proves the proposition.
\end{proof}

Combining Proposition~\ref{p:phase} with  Theorem~\ref{t:teravainen} gives the comparison used in both applications.

\begin{proposition}[Piatetski--Shapiro transference]\label{p:transfer}
There are $\beta>0$ and $c_*>1$, depending only on the family
\eqref{e:polynomials}, such that, whenever $1<c<c_*$ and
$F_0,\ldots,F_\ell$ are $1$-bounded and supported on a fixed multiple
of $[-M^d,M^d]$,
\begin{equation}\label{e:discrete-transfer}
 \left|\frac1{M^{d+1}}\sum_x\sum_{m\leq M}r_c(m)
 \prod_{i=0}^{\ell}F_i(x+P_i(m))\right|
 \ll_{c,\mathbf P}M^{-\beta}.
\end{equation}
The same constants may be chosen so that, writing $\gamma=1/c$,
\begin{equation}\label{e:c-choice}
 1-\gamma<\beta/2
\end{equation}
and
\begin{equation}\label{e:ergodic-transfer}
 \left\|\E_{m\leq M}r_c(m)
 \prod_{i=1}^{\ell}f_i(T^{P_i(m)}\cdot)\right\|_1
 \ll_{c,\mathbf P}M^{-\beta}\prod_i\|f_i\|_\infty.
\end{equation}
\end{proposition}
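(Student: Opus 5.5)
Both estimates come from plugging Proposition~\ref{p:phase} into the scaled von Neumann bound~\eqref{e:scaled-vn} (for~\eqref{e:discrete-transfer}) and into Proposition~\ref{p:orbit} (for~\eqref{e:ergodic-transfer}), with $g=r_c$. The one delicate point is the order in which $\beta$ and $c_*$ are fixed, because~\eqref{e:c-choice} ties them together.

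\textbf{Step 1: fix the constants.} Fix some $c_0\in(1,2)$, say $c_0=3/2$. Proposition~\ref{p:phase} with $d=\deg P_\ell$ then supplies $\sigma=\sigma(d,c_0)>0$. Theorem~\ref{t:teravainen}, applied with $Q_i=P_i$, supplies $K=K(d)\ge1$. Both are independent of $c\in(1,c_0)$. Set
\[
 \beta:=\min\{1,\sigma\}/(2K),
\]
and then choose $c_*\in(1,c_0)$ with $1-1/c_*<\beta/2$. For every $1<c<c_*$ we have $1-\gamma<1-1/c_*<\beta/2$, which is~\eqref{e:c-choice}. This is why $\beta$ must be fixed before $c_*$: $\sigma$ and $K$ are uniform in $c<c_0$, so nothing circular occurs.

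\textbf{Step 2: verify the hypotheses for $g=r_c$.} Take $B=1$, which is allowed since $|r_c|\le1$ by~\eqref{e:delta-properties}. Take $E=C_cM^{-\sigma}$, valid for $M\ge M_0(c)$ by Proposition~\ref{p:phase}. Proposition~\ref{p:phase} is stated with $\e(Q(m))$ over all real $Q$ of degree at most $d$. This is exactly the hypothesis~\eqref{e:phase-hypothesis} in the form needed, and in particular it gives $\|r_c\|_{u^{d+1}[M]}\le E$.

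\textbf{Step 3: the discrete estimate~\eqref{e:discrete-transfer}.} Since $P_0=0$, the $i=0$ factor is $F_0(x)$, so the sum in~\eqref{e:discrete-transfer} has exactly the form of~\eqref{e:generalized-vn}, with $\theta$ replaced by $r_c$ and the family $Q_i=P_i$ of strictly increasing degrees. By~\eqref{e:scaled-vn} the left side is
\[
 \ll_{C_0,\mathbf P}\bigl(M^{-1}+C_cM^{-\sigma}\bigr)^{1/K}\ll_c M^{-\min\{1,\sigma\}/K}\le M^{-\beta}.
\]
For the finitely many $M<M_0(c)$, the trivial bound $O(1)$ on the normalized sum is absorbed into the implied constant depending on $c$; the support condition makes the normalized sum $O_{C_0}(1)$.

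\textbf{Step 4: the ergodic estimate~\eqref{e:ergodic-transfer}.} Apply Proposition~\ref{p:orbit} with the same $g=r_c$, $B=1$ and $E=C_cM^{-\sigma}$. This gives the bound $\ll M^{-\min\{1,\sigma\}/K}\prod\|f_i\|_\infty$. Small $M$ are again handled trivially, since the $L^1$ norm is at most $\prod\|f_i\|_\infty$.

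Thus the substantive input is entirely Proposition~\ref{p:phase}, whose exponent is uniform in $c<c_0$. The only real obstacle is this bookkeeping of uniformity: $K$ depends only on $d$, $\sigma$ only on $(d,c_0)$, and $c$-dependence enters only through the implied constants. That uniformity is what allows $c_*$ to be chosen after $\beta$.
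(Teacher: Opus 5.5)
Your proposal is correct and follows the paper's proof. You insert the $c$-uniform bound of Proposition~\ref{p:phase} (with $B=1$) into Theorem~\ref{t:teravainen} and Proposition~\ref{p:orbit}, fix $\beta$ from $\sigma$ and $K$, and only then shrink $c_*$ to secure~\eqref{e:c-choice}. Your explicit choice $\beta=\min\{1,\sigma\}/(2K)$ and your trivial handling of $M<M_0(c)$ make precise what the paper leaves implicit.
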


\begin{proof}
Use~\eqref{e:phase-cancellation} in
Theorem~\ref{t:teravainen} and Proposition~\ref{p:orbit}, with $B=1$.
Both right-hand sides are $O(M^{-\beta})$ for some $\beta>0$ depending
only on the fixed polynomial family and a fixed upper bound $c_0>1$.
The exponent is uniform for $1<c<c_0$; only the implied constant and
the threshold for $M$ may depend on $c$.  Finally shrink $c_*$ so that
$1-1/c<\beta/2$ for $1<c<c_*$.
\end{proof}

We recall the local-factor terminology from
\cite[Definitions~3.1, 3.2, 3.4 and~3.6]{ShaoWang}.  A factor
$\mathcal B$ of $[N]$ is a partition of $[N]$.  If $\mathcal B(x)$
denotes the atom containing $x$, define
\[
 (\Pi_{\mathcal B}f)(x)
 :=\frac1{|\mathcal B(x)|}\sum_{y\in\mathcal B(x)}f(y),
\]
extended by zero outside $[N]$.  The factor $\mathcal B$ is local of
modulus $q$ and resolution $M$ if every atom is an arithmetic
progression of step $q$ and cardinality between $M$ and $2M$.  A local
factor chain $\mathcal B_1,\ldots,\mathcal B_\ell$ of common modulus
$q$ is a sequence of local factors such that $\mathcal B_i$ refines
$\mathcal B_{i+1}$ for every $i<\ell$. 

The remaining combinatorial input is the following consequence of
Shao--Wang's quantitative popular-difference \cite[Theorem 1.2]{ShaoWang}. 

\begin{lemma}\label{l:shao-wang}
There is $C=C(\mathbf P)\geq1$ such that the following holds.  Let
$A\subset[N]$ have density $\delta$, and put
$M_0=\lfloor N^{1/d}\rfloor$.  If
\begin{equation}\label{e:shao-wang-size}
 N\geq\exp\!\bigl(\exp(C\delta^{-C})\bigr),
\end{equation}
then
\begin{equation}\label{e:shao-wang-count}
 \frac1{NM_0}\sum_{x\in\Z}\sum_{m\leq M_0}
 \prod_{i=0}^{\ell}\1_A(x+P_i(m))
 \geq\exp\!\bigl(-\exp(C\delta^{-C})\bigr).
\end{equation}
\end{lemma}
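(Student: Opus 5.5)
This is a direct consequence of Shao--Wang's popular-difference theorem \cite[Theorem~1.2]{ShaoWang}. The plan is to reformulate that theorem as a lower bound for the total count $\sum_x\sum_{m\le M_0}\prod_i\1_A(x+P_i(m))$. In their formulation, for $A\subset[N]$ of density $\delta$ there is a set $D$ of ``popular'' differences $m\in[M_0]$ with $|D|\geq \eta(\delta) M_0$. For each $m\in D$ the pattern count satisfies $\sum_x\prod_i\1_A(x+P_i(m))\geq \eta(\delta)N$, where $\eta(\delta)=\exp(-\exp(C'\delta^{-C'}))$. I would also follow their Proposition~5.1 and Lemma~5.2, which give exactly this for the local factor chain $\mathcal B_1,\dots,\mathcal B_\ell$ of common modulus $q$ constructed by the density increment. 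The count is controlled from below by $\delta^{O(1)}$-type quantities $\prod_i \Pi_{\mathcal B_i}\1_A$, restricted to $m\equiv 0\pmod q$ with $m$ at most a small multiple of the resolution.

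First, I would quote the popular-difference statement in its quantitative form and check that the size threshold in \cite{ShaoWang} is of the form \eqref{e:shao-wang-size}. Their bounds are polylogarithmic in $\delta$ for the modulus $q$ and resolution, but double-exponential after passing through the energy increment. I would absorb all constants into a single $C=C(\mathbf P)$, enlarging $C$ where needed.

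Second, I would drop the nonpopular differences, since all summands are nonnegative:
\[
\frac1{NM_0}\sum_x\sum_{m\le M_0}\prod_{i=0}^{\ell}\1_A(x+P_i(m))\geq \frac1{NM_0}\sum_{m\in D}\eta N\geq \eta\cdot\frac{|D|}{M_0}\geq \eta^2 .
\]
Since $\eta^2=\exp(-2\exp(C'\delta^{-C'}))\geq \exp(-\exp(C\delta^{-C}))$ for $C\geq C'+1$ (using $\delta\le1$), this gives \eqref{e:shao-wang-count}.

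The main obstacle is bookkeeping. Shao--Wang normalize with $M\approx (N/\text{const})^{1/d}$ and possibly restrict $m$ to a range like $[M]$ with $m$ divisible by $q$, or with a leading-coefficient normalization. One needs their $M$ to be comparable to $M_0=\lfloor N^{1/d}\rfloor$, and the pattern $x+P_i(m)$ to stay in a range where their count applies. If their range is $m\le \epsilon M_0$, the count over $m\le M_0$ is only larger, so only the normalization factor $\epsilon$ (at worst $\delta^{-O(1)}$ or constant) changes, and this is again absorbed into $C$. Likewise, $N$ being sufficiently large relative to $q$ is guaranteed by \eqref{e:shao-wang-size}.
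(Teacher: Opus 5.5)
Your closing step (discard every $m$ outside a good set and use nonnegativity) is the same as the paper's, but the references do not supply the input you feed into it. \cite[Theorem~1.2]{ShaoWang} produces a \emph{single} popular parameter, not a set $D\subset[M_0]$ with $|D|\geq\eta M_0$. With one $m$, your inequality gives only $\eta/M_0$, which is useless. Nor do \cite[Proposition~5.1 and Lemma~5.2]{ShaoWang} ``give exactly this''. Proposition~5.1 is an \emph{averaged} comparison over $m\leq\varepsilon M_1$ with $q\mid m$: it compares the true count with the count in which $\1_A(x+P_i(m))$ is replaced by $(\Pi_{\mathcal B_i}\1_A)(x+P_i(m))$, up to an additive error $\varepsilon$. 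Lemma~5.2 bounds from below products of projections $\Pi_{\mathcal B}\1_A$ along a factor chain, all evaluated at the \emph{same} point $x$. Bridging these two statements, and then passing to the full average over $m\leq M_0$, is the entire content of the lemma. The paper says explicitly that it spells this out for exactly this reason, and your proposal treats it as already contained in the citation.

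Concretely, the missing argument has four steps.
(i) Take $\varepsilon=c_1\delta^{\ell+1}$, so that the $\varepsilon$-error in Proposition~5.1 is dominated by the eventual main term $\delta^{\ell+1}$.
(ii) For $m\leq\varepsilon M_1$ with $q\mid m$, note that $q\mid P_i(m)$ (since $P_i(0)=0$) and $|P_i(m)|\ll\varepsilon M_1^{\deg P_i}$. The atoms of $\mathcal B_i$ are progressions of step $q$ and length $\asymp M_1^{\deg P_i}$, so $x$ and $x+P_i(m)$ lie in the same atom for all but $O(\varepsilon N)$ values of $x$. This lets you replace $(\Pi_{\mathcal B_i}\1_A)(x+P_i(m))$ by $(\Pi_{\mathcal B_i}\1_A)(x)$.
(iii) Since $\prod_{i\geq1}\Pi_{\mathcal B_i}\1_A$ is $\mathcal B_1$-measurable, you may replace $\1_A(x)$ by $(\Pi_{\mathcal B_1}\1_A)(x)$. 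Applying Lemma~5.2 to the chain $\mathcal B_1,\mathcal B_1,\mathcal B_2,\ldots,\mathcal B_\ell$ then gives at least $\delta^{\ell+1}$.
(iv) Together these give an average of at least $\frac12\delta^{\ell+1}$ over the at least $\varepsilon M_1/(2q)$ admissible $m$, and nonnegativity finishes. A popular set $D$ would then follow by Markov's inequality if you wanted one, but it is not needed.

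Your bookkeeping also understates the loss. It is $\varepsilon M_1/(qM_0)$, which is double-exponentially small through both $q\leq\exp(\exp(C\delta^{-C}))$ and $M_1/M_0\geq\exp(-\exp(C\delta^{-C}))$, not merely a factor polynomial in $\delta$. It can still be absorbed into $C$, but it must be tracked.
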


\begin{proof}
We spell out the deduction because Theorem~1.2 of \cite{ShaoWang}
produces one popular parameter, whereas we need a lower bound averaged
over all positive parameters.  Set
$\varepsilon=c_1\delta^{\ell+1}$, where
$c_1=c_1(\mathbf P)>0$ is sufficiently small.  After enlarging $C$,
the fixed polynomials have $(C,1)$-coefficients.  Since $\varepsilon=c_1\delta^{\ell+1}$, the size
condition~\eqref{e:shao-wang-size}, with $C$ chosen sufficiently large
in terms of $\mathbf P$, excludes the alternative $ N\leq\exp\!\bigl(\exp(O_{\mathbf P}
 (\varepsilon^{-O_d(1)}))\bigr)$
in \cite[Proposition~5.1]{ShaoWang}.\footnote{We use
Proposition~5.1 of \cite{ShaoWang} with the evident correction
\[
 M\geq N^{1/d}\exp\!\bigl(-\exp(O(\varepsilon^{-O(1)}))\bigr).
\]
The displayed lower bound in the published statement has misplaced
parentheses and is incompatible with the accompanying bound
\(M\leq N^{1/d}\).}
That proposition, applied with
$f_0=\cdots=f_\ell=\1_A$, therefore gives positive integers $q,M_1$
satisfying
\begin{equation}\label{e:shao-wang-parameters}
 M_0\geq M_1\geq
 M_0\exp\!\bigl(-\exp(C\delta^{-C})\bigr),
 \qquad q\leq\exp\!\bigl(\exp(C\delta^{-C})\bigr),
\end{equation}
and a local factor chain
$\mathcal B_1,\ldots,\mathcal B_\ell$ of modulus $q$ and resolutions
$M_1^{\deg P_1},\ldots,M_1^{\deg P_\ell}$ such that
\begin{align}\label{e:shao-wang-regularity}
 \bigg|&
 \frac1N\sum_x
 \E_{\substack{m\leq\varepsilon M_1\\q\mid m}}
 \prod_{i=0}^{\ell}\1_A(x+P_i(m))-\frac1N\sum_x
 \E_{\substack{m\leq\varepsilon M_1\\q\mid m}}
 \1_A(x)\prod_{i=1}^{\ell}
 (\Pi_{\mathcal B_i}\1_A)(x+P_i(m))
 \bigg|\leq\varepsilon.
\end{align}
Here $\Pi_{\mathcal B_i}f$ denotes the average of $f$ on each atom of
$\mathcal B_i$.
If $m\leq\varepsilon M_1$ and $q\mid m$, then
\[
 q\mid P_i(m),\qquad
 |P_i(m)|\ll_{\mathbf P}\varepsilon M_1^{\deg P_i},
\]
since $P_i(0)=0$.  Each atom of $\mathcal B_i$ is an arithmetic
progression of step $q$ and length comparable to
$M_1^{\deg P_i}$.  Thus the number of $x\in[N]$ for which $x$ and
$x+P_i(m)$ lie in different atoms is at most
\[
 \ll_{\mathbf P}
 \left(\frac{N}{M_1^{\deg P_i}}+1\right)
 \frac{|P_i(m)|}{q}
 \ll_{\mathbf P}\varepsilon N.
\]
Consequently,
\begin{align*}
 &\frac1N\sum_x
 \E_{\substack{m\leq\varepsilon M_1\\q\mid m}}
 \1_A(x)\prod_{i=1}^{\ell}
 (\Pi_{\mathcal B_i}\1_A)(x+P_i(m))=
 \frac1N\sum_x\1_A(x)\prod_{i=1}^{\ell}
 (\Pi_{\mathcal B_i}\1_A)(x)+O_{\mathbf P}(\varepsilon).
\end{align*}
Since $\mathcal B_1$ refines every $\mathcal B_i$, the product on the
right is $\mathcal B_1$-measurable.  Hence
\[
 \frac1N\sum_x\1_A(x)\prod_{i=1}^{\ell}
 (\Pi_{\mathcal B_i}\1_A)(x)
 =
 \frac1N\sum_x(\Pi_{\mathcal B_1}\1_A(x))^2
 \prod_{i=2}^{\ell}(\Pi_{\mathcal B_i}\1_A)(x).
\]
So \cite[Lemma~5.2]{ShaoWang}, applied to the chain
$\mathcal B_1,\mathcal B_1,\mathcal B_2,\ldots,\mathcal B_\ell$,
shows that this is at least $\delta^{\ell+1}$.  Combining this with
\eqref{e:shao-wang-regularity} and taking $c_1$ sufficiently small
gives
\begin{equation}\label{e:popular-average}
 \frac1N\sum_x
 \E_{\substack{m\leq\varepsilon M_1\\q\mid m}}
 \prod_{i=0}^{\ell}\1_A(x+P_i(m))
 \geq\delta^{\ell+1}-O_{\mathbf P}(\varepsilon)
 \geq\tfrac12\delta^{\ell+1}.
\end{equation}
The size assumption~\eqref{e:shao-wang-size}, with $C$ enlarged if
necessary, ensures $\varepsilon M_1/q\geq2$, and 
\[
 |\{m\leq\varepsilon M_1:q\mid m\}|
 =\left\lfloor\frac{\varepsilon M_1}{q}\right\rfloor
 \geq\frac{\varepsilon M_1}{2q}.
\]
Since $\varepsilon M_1\leq M_0$ and all summands are nonnegative,
opening~\eqref{e:popular-average} gives
\begin{align*}
 &\frac1{NM_0}\sum_x\sum_{m\leq M_0}
 \prod_{i=0}^{\ell}\1_A(x+P_i(m))\\
 &\geq
 \frac{|\{m\leq\varepsilon M_1:q\mid m\}|}{M_0}
 \left(
 \frac1N\sum_x
 \E_{\substack{m\leq\varepsilon M_1\\q\mid m}}
 \prod_{i=0}^{\ell}\1_A(x+P_i(m))
 \right)\\
 &\geq
 \frac{\varepsilon\delta^{\ell+1}}{4q}\frac{M_1}{M_0}.
\end{align*}
Using~\eqref{e:shao-wang-parameters} and
$\varepsilon=c_1\delta^{\ell+1}$, the last expression is at least
\[
 \exp\!\bigl(-\exp(C\delta^{-C})\bigr)
\]
after enlarging $C$.  This proves~\eqref{e:shao-wang-count}.
\end{proof}

\begin{proof}[Proof of Theorem~\ref{t:szemeredi}]
Put $M=\lfloor N^{1/d}\rfloor$, $\gamma=1/c$, and
\[
 F_A(x,m):=\prod_{i=0}^{\ell}\1_A(x+P_i(m)).
\]
Strict monotonicity of $n\mapsto\lfloor n^c\rfloor$ gives the exact
reindexing identity
\begin{equation}\label{e:count-reindex}
 \mathcal C_{c,\mathbf P}(A;M)
 =\sum_x\sum_{m\leq M}\chi_c(m)F_A(x,m).
\end{equation}
Since $N\asymp_d M^d$, Proposition~\ref{p:transfer} and
\eqref{e:weights} give
\begin{align}\label{e:count-decomposition}
 \frac{\mathcal C_{c,\mathbf P}(A;M)}{NM^\gamma}
 &=\frac1{NM^\gamma}\sum_{x,m\leq M}\Delta_c(m)F_A(x,m)
 +O_{c,\mathbf P}(M^{1-\gamma-\beta}).
\end{align}
The function $m\mapsto\Delta_c(m)$ is positive and decreasing, and the
mean value theorem gives
\begin{equation}\label{e:delta-lower}
 \Delta_c(M)\asymp_c M^{\gamma-1}.
\end{equation}
Consequently Lemma~\ref{l:shao-wang} implies
\begin{align*}
 \frac1{NM^\gamma}\sum_{x,m\leq M}\Delta_c(m)F_A(x,m)
 &\geq\frac{\Delta_c(M)}{M^{\gamma-1}}
 \left(\frac1{NM}\sum_{x,m\leq M}F_A(x,m)\right)\\
 &\gg_c\exp\!\bigl(-\exp(C\delta^{-C})\bigr).
\end{align*}
By~\eqref{e:c-choice}, the error in~\eqref{e:count-decomposition} is
$O(M^{-\beta/2})$.  Increasing $C$ to absorb the fixed positive
constant and taking $\zeta=\beta/2$ proves
\eqref{e:quantitative-count}.

Now assume $\delta\geq(\log\log N)^{-\kappa}$.  Choose $\kappa>0$ so
small that $\kappa C<1/2$.  Then
\[
 \exp(C\delta^{-C})
 \leq\exp\!\bigl(C(\log\log N)^{\kappa C}\bigr)=o(\log N),
\]
so the positive term in~\eqref{e:quantitative-count} is $N^{-o(1)}$.
It dominates the fixed power $M^{-\zeta}$ for all sufficiently large
$N$, and the count is positive.

Finally, for every \(1\leq i\leq\ell\), the polynomial \(P_i\) is nonzero,
and, for every \(1\leq i<j\leq\ell\), the polynomial \(P_i-P_j\)
is nonzero because \(P_i\) and \(P_j\) have distinct degrees.
Consequently, the equations
\[
 P_i(m)=0
 \quad\text{and}\quad
 P_i(m)=P_j(m)
\]
have only finitely many positive integer solutions.
 The contribution of all such $m$ to
\eqref{e:count-reindex} is $O_{\mathbf P}(N)$, but the preceding
lower bound is $NM^\gamma N^{-o(1)}$.  Removing these parameters leaves
a pairwise distinct configuration.
\end{proof}

\section{Pointwise ergodic averages}

The reference pointwise theorem  is
\cite[Theorem~1.3]{KMPWW}: for integer polynomials of distinct degrees,
the averages~\eqref{e:integer-average} converge pointwise almost
everywhere.  It remains to show that sampling the parameter along
$\calS_c$ does not change their limit.

We first record the elementary summability fact needed for the smooth
model.

\begin{lemma}[Riesz means]\label{l:riesz}
Let $(z_m)$ be a bounded complex sequence and suppose
\[
 \frac1M\sum_{m\leq M}z_m\longrightarrow z.
\]
For $0<\gamma<1$, put
$\Delta(m)=(m+1)^\gamma-m^\gamma$ and
$D_M=\sum_{m\leq M}\Delta(m)$.  Then
\begin{equation}\label{e:riesz-limit}
 \frac1{D_M}\sum_{m\leq M}\Delta(m)z_m\longrightarrow z.
\end{equation}
\end{lemma}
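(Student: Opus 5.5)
The plan is Abel summation. Write $S_m=\sum_{k\leq m}z_k$ and $S_0=0$. By hypothesis,
\[
 S_m=mz+o(m),
\]
and $|S_m|\leq Bm$, where $B=\sup|z_k|$. Summing by parts gives
\[
 \sum_{m\leq M}\Delta(m)z_m
 =\Delta(M)S_M+\sum_{m<M}\bigl(\Delta(m)-\Delta(m+1)\bigr)S_m .
\]
The function $t\mapsto(t+1)^\gamma-t^\gamma$ is positive and decreasing for $0<\gamma<1$, since its derivative is $\gamma\bigl((t+1)^{\gamma-1}-t^{\gamma-1}\bigr)<0$. So the weights $\Delta(m)-\Delta(m+1)$ are nonnegative, and this is what makes the argument a regular (positive) summation method.

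Next, substitute $S_m=mz+\varepsilon_m m$ with $\varepsilon_m\to0$ and $|\varepsilon_m|\leq 2B$. The main term is the same summation by parts applied to the constant sequence $z_m\equiv z$:
\[
 \Delta(M)Mz+\sum_{m<M}\bigl(\Delta(m)-\Delta(m+1)\bigr)mz=z\sum_{m\leq M}\Delta(m)=zD_M .
\]
It therefore remains to show that the error
\[
 E_M=\Delta(M)M\varepsilon_M+\sum_{m<M}\bigl(\Delta(m)-\Delta(m+1)\bigr)m\varepsilon_m
\]
satisfies $E_M=o(D_M)$.

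For the boundary term, the mean value theorem gives $\Delta(M)\asymp M^{\gamma-1}$, so $|\Delta(M)M\varepsilon_M|\ll M^\gamma|\varepsilon_M|$. Since $D_M=(M+1)^\gamma-1\asymp M^\gamma$, this term is $o(D_M)$.

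For the sum, fix $\eta>0$ and choose $m_0$ with $|\varepsilon_m|\leq\eta$ for all $m\geq m_0$. The terms with $m<m_0$ contribute $O_{m_0}(1)$, which is $o(D_M)$ because $D_M\to\infty$. The terms with $m\geq m_0$ are bounded by $\eta\sum_{m<M}\bigl(\Delta(m)-\Delta(m+1)\bigr)m$. By the same summation by parts identity with $z=1$, this sum equals $D_M-M\Delta(M)\leq D_M$. Hence $\limsup|E_M|/D_M\leq\eta$, and letting $\eta\to0$ proves \eqref{e:riesz-limit}.

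The only delicate point is to use the monotonicity of $\Delta$, so that the weights in the summation by parts are nonnegative, rather than crude bounds on each term. Everything else is a routine Toeplitz-type estimate.
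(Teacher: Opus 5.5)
Your proposal is correct and takes essentially the same route as the paper: summation by parts, with nonnegative weights $\Delta(m)-\Delta(m+1)$ coming from the monotonicity of $\Delta$, followed by a Toeplitz-type argument. The only difference is that you verify the Toeplitz step by hand, splitting at $m_0$ and treating the boundary term $M\Delta(M)\varepsilon_M$ separately, whereas the paper checks that the coefficients are nonnegative, tend to zero columnwise and sum to one, and then cites the Toeplitz theorem.
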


\begin{proof}
Put $ a_k:=\frac1k\sum_{m\leq k}z_m,$
so that \(a_k\to z\).  Since \(0<\gamma<1\), the sequence
\(\Delta(m)\) is positive and decreasing.  Summation by parts gives
\[
 \frac1{D_M}\sum_{m\leq M}\Delta(m)z_m
 =
 \sum_{k<M}\frac{k(\Delta(k)-\Delta(k+1))}{D_M}a_k
 +\frac{M\Delta(M)}{D_M}a_M.
\]
The coefficients are nonnegative, tend to zero for each fixed \(k\),
and, by telescoping, have sum
\[
 \frac{\sum_{k<M}k(\Delta(k)-\Delta(k+1))+M\Delta(M)}
 {D_M}=1.
\]
Thus they satisfy the hypotheses of the Toeplitz theorem
\cite[Theorem~2.6]{DaskalakisPointwise}, which proves
\eqref{e:riesz-limit}.
\end{proof}

\begin{proof}[Proof of Theorem~\ref{t:pointwise}]
By homogeneity suppose $\|f_i\|_\infty\leq1$, and write
\[
 z_m(x):=\prod_{i=1}^{\ell}f_i(T^{P_i(m)}x),
 \qquad M_R:=\lfloor R^c\rfloor,\qquad \gamma=1/c.
\]
Reindexing by $m=\lfloor n^c\rfloor$ gives
\begin{equation}\label{e:ergodic-reindex}
 A_R^{(c)}(\mathbf f)=\frac1R\sum_{m\leq M_R}\chi_c(m)z_m.
\end{equation}
Using~\eqref{e:weights},~\eqref{e:c-choice}, and~\eqref{e:ergodic-transfer},
\begin{align}
 &\left\|A_R^{(c)}(\mathbf f)-\frac1R
 \sum_{m\leq M_R}\Delta_c(m)z_m\right\|_1\ll_{c,\mathbf P}\frac{M_R}{R}M_R^{-\beta}
 \ll_{c,\mathbf P}M_R^{1-\gamma-\beta}
 \ll_{c,\mathbf P}R^{-c\beta/2} .                   \label{e:l1-power}
\end{align}

By \cite[Theorem~1.3]{KMPWW}, for almost every $x$ the averages
$M^{-1}\sum_{m\leq M}z_m(x)$ converge to a limit $L(x)$.
Lemma~\ref{l:riesz}, applied pointwise, gives
\begin{equation}\label{e:smooth-limit}
 \frac1{(M+1)^\gamma-1}\sum_{m\leq M}\Delta_c(m)z_m(x)
 \longrightarrow L(x).
\end{equation}
Moreover,
\begin{equation}\label{e:normalization}
 (M_R+1)^\gamma-1=R+O(1).
\end{equation}
Combining~\eqref{e:smooth-limit}, with $M=M_R$, and
\eqref{e:normalization}, we obtain
\begin{align*}
 \frac1R\sum_{m\leq M_R}\Delta_c(m)z_m(x)
 &=
 \frac{(M_R+1)^\gamma-1}{R}
 \left(
 \frac1{(M_R+1)^\gamma-1}
 \sum_{m\leq M_R}\Delta_c(m)z_m(x)
 \right)\longrightarrow L(x)
\end{align*}
for almost every $x$.

Choose $A>1+ \frac{2}{c\beta}$ and put $R_j=\lfloor j^A\rfloor$.  By
\eqref{e:l1-power},
\begin{align*}
 \sum_{j=1}^{\infty}
 \left\|A_{R_j}^{(c)}(\mathbf f)-\frac1{R_j}
 \sum_{m\leq M_{R_j}}\Delta_c(m)z_m\right\|_1
 &\ll_{c,\mathbf P}\sum_{j=1}^{\infty}R_j^{-c\beta/2}\ll_A\sum_{j=1}^{\infty}j^{-Ac\beta/2}<\infty.
\end{align*}
Consequently, by Tonelli's theorem,
\begin{align*}
 \int_X\sum_{j=1}^{\infty}
 \left|A_{R_j}^{(c)}(\mathbf f)(x)-\frac1{R_j}
 \sum_{m\leq M_{R_j}}\Delta_c(m)z_m(x)\right|\,d\nu(x)<\infty.
\end{align*}
Thus, for almost every $x$,
\[
 \left|A_{R_j}^{(c)}(\mathbf f)(x)-\frac1{R_j}
 \sum_{m\leq M_{R_j}}\Delta_c(m)z_m(x)\right|\longrightarrow0.
\]
Since the second term converges almost everywhere to $L(x)$, it follows
that
\[
 A_{R_j}^{(c)}(\mathbf f)(x)\longrightarrow L(x)
\]
for almost every $x$.

It remains to pass from $(R_j)$ to all positive integers.  If
$R_j\leq R<R_{j+1}$, then
\begin{align*}
 A_R^{(c)}(\mathbf f)(x)-A_{R_j}^{(c)}(\mathbf f)(x)
 &=
 \left(\frac1R-\frac1{R_j}\right)
 \sum_{n\leq R_j}\prod_{i=1}^{\ell}
 f_i\bigl(T^{P_i(\lfloor n^c\rfloor)}x\bigr)\\
 &\quad+\frac1R\sum_{R_j<n\leq R}\prod_{i=1}^{\ell}
 f_i\bigl(T^{P_i(\lfloor n^c\rfloor)}x\bigr).
\end{align*}
Since $\|f_i\|_\infty\leq1$, taking absolute values gives
\begin{align*}
 \left|A_R^{(c)}(\mathbf f)(x)-A_{R_j}^{(c)}(\mathbf f)(x)\right|
 &\leq
 \left(\frac1{R_j}-\frac1R\right)R_j+\frac{R-R_j}{R}=2\frac{R-R_j}{R}
 \leq2\frac{R_{j+1}-R_j}{R_j}.
\end{align*}
Moreover, the mean value theorem gives
\[
 R_{j+1}-R_j
 \leq (j+1)^A-j^A+1\ll_A j^{A-1},
 \qquad R_j\gg_A j^A.
\]
Hence
\[
 \sup_{R_j\leq R<R_{j+1}}
 \left|A_R^{(c)}(\mathbf f)(x)-A_{R_j}^{(c)}(\mathbf f)(x)\right|
 \ll_A j^{-1}\longrightarrow0
\]
uniformly in $x$.  This fills the gaps and proves convergence along the
full sequence, with the same limit as~\eqref{e:integer-average}.
\end{proof}

\end{document}